\documentclass[pdflatex,sn-mathphys-num]{sn-jnl}

\usepackage{graphicx}%
\usepackage{multirow}%
\usepackage{amsmath,amssymb,amsfonts}%
\usepackage{amsthm}%
\usepackage{mathrsfs}%
\usepackage[title]{appendix}%
\usepackage{xcolor}%
\usepackage{textcomp}%
\usepackage{manyfoot}%
\usepackage{booktabs}%
\usepackage[ruled,vlined,resetcount,algosection]{algorithm2e}
\usepackage{algpseudocode}%
\usepackage{listings}%
\usepackage[caption=false]{subfig}

\theoremstyle{thmstyleone}%
\newtheorem{theorem}{Theorem}
\newtheorem{proposition}[theorem]{Proposition}%

\theoremstyle{thmstyletwo}%
\newtheorem{remark}{Remark}%
\newtheorem{lemma}{Lemma}

\theoremstyle{thmstylethree}%

\DeclareMathOperator{\prox}{prox}
\DeclareMathOperator*\Id{Id}%

\begin{document}

\title[Inertial parameterized Douglas-Rachford for generalized DC programming problems]{Inertial parameterized Douglas-Rachford for generalized DC programming problems}


\author[1]{\fnm{Avinash} \sur{Dixit}}\email{avinash@kmc.du.ac.in}
\equalcont{These authors contributed equally to this work.}

\author*[2]{\fnm{Pankaj} \sur{Gautam}}\email{pankaj.gautam@amsc.iitr.ac.in}

\author[3,4]{\fnm{Sorin-Mihai} \sur{Grad}}\email{sorin-mihai.grad@uni-due.de}
\equalcont{These authors contributed equally to this work.}

\affil*[1]{\orgdiv{Department of Mathematics}, \orgname{Kirori Mal College, University of Delhi}, \orgaddress{ \city{Delhi}, \postcode{110007}, \country{India}}}

\affil[2]{\orgdiv{Department of Applied Mathematics \& Scientific Computing}, \orgname{Indian Institute of Technology Roorkee}, \orgaddress{\city{Roorkee}, \postcode{247667}, \country{India}}}

\affil[3]{\orgdiv{Faculty of Mathematics}, \orgname{University of Duisburg-Essen}, \orgaddress{\street{Thea-Leymann-Str. 9}, \city{Essen}, \postcode{45127}, \country{Germany}}}

\affil[4]{\orgdiv{Department of Mathematics}, \orgname{Babe\textcommabelow{s}-Bolyai University Cluj-Napoca}, \orgaddress{\street{Kog\u{a}lniceanu Street, No. 1}, \city{Cluj-Napoca}, \postcode{400084}, \country{Romania}}}


\abstract{We propose two new Douglas-Rachford type algorithms for solving generalized DC programming problems. More precisely, we deal with the minimization of the sum of two possibly nonsmooth convex functions from which a Fr\'echet differentiable convex function with a Lipschitz continuous gradient is subtracted, in a Hilbertian framework. We introduce an inertial Douglas-Rachford DC algorithm and a parametrized inertial Douglas-Rachford DC algorithm for solving optimization problems of this type, and study their convergence behavior. Numerical experiments illustrate the behavior of the proposed algorithms when solving sparse recovery and robust classification problems, highlighting their superior performance in comparison with their state-of-the-art counterparts.}

\keywords{Douglas–Rachford algorithm, DC Programming, Nonconvex optimization, Inertial technique}



\maketitle

\section{Introduction}\label{section1}
Non-convex optimization tools and methods play a crucial role in solving problems arising in the real world. Many applications from different branches of science can be framed as non-convex optimization problems; see, for instance, \cite{YJS, BMN}. The lack of convexity in a function increases the computational complexity when determining its global minimizers (or suitable approximations), as one may land on local minimizers of the function. The difficulty is more serious when we are dealing with high-dimensional datasets. In the last four decades, different methods for solving nonconvex optimization problems, such as cutting plane, branch and bound, and branch and cut, which can guarantee the globality of the solutions were proposed, but with high computational costs. On the other hand, the proximal point algorithm and its variants for minimizing structured functions known as splitting methods deliver accurate results for reasonable costs.

Considering that $\phi$ and $h: \mathcal{H}\to (-\infty,+ \infty]$ are proper, convex functions, and $\mathcal{H}$ is a Hilbert space, a difference of convex (DC, from now on) optimization problem is defined as
$$    \min_{x\in \mathcal{H}} \{\phi(x)-h(x)\}. $$
Motivated by recent developments in both theory and applications (see  \cite{tao1986algorithms, ding2022inexact} for applications in dimensionality reduction, \cite{tu2020hybrid} for one in image processing, \cite{BMN} for one in logistics, and \cite{wu2021nonconvex} for one in multiple-input multiple-output), we consider in this work a class of generalized DC problems, namely
\begin{align}\label{1.1}
    \min_{x\in \mathcal{H}} \{\phi(x)+g(x)-h(x)\},
\end{align}
where $\phi, g:\mathcal{H}\to (-\infty, +\infty]$ are proper, convex and lower semicontinuous functions, and $h:\mathcal{H} \to (-\infty, +\infty)$ is a Fr\'echet differentiable convex function with an $L$-Lipschitz continuous gradient $\nabla h$, where $L>0$. Using the optimality conditions from convex optimization, problem \eqref{1.1} is equivalent, under some standard hypotheses, to the inclusion problem
\begin{align}\label{en1.3}
    \text{find} ~x\in \mathcal{H} \text{ : } 0 \in \partial \phi (x) + \partial g(x) -\nabla h(x),
\end{align}
where $\partial$ denotes the (convex) subdifferential of the corresponding function. We denote the solution set of \eqref{1.1} by $\Omega_{GDCP}$, and we call its elements \textit{stationary points} of \eqref{1.1}.

In 2022, Chuang et al. \cite{chuang2022unified} proposed a unified Douglas-Rachford (DR) algorithm that converges to a critical point to \eqref{1.1}. In \cite{YJS}, two fast versions of the DR splitting method were proposed for solving problem \eqref{1.1}, while in \cite{HDO} an algorithm derived from a three-operator splitting scheme was employed to the same end. A similar DC optimization problem was solved in \cite{BAB, PDA} by means of splitting proximal gradient algorithms; however, with the function $g$ taken Fr\'echet differentiable with a Lipschitz continuous gradient, and $h$ not necessarily smooth.

Recall that DR splitting was originally introduced by Douglas and Rachford \cite{douglas1956numerical} to find numerical solutions to heat differential equations. The convex version of \eqref{1.1}, i.e., when $h\equiv 0$, namely
\begin{align}\label{COP}
    \min_{x\in \mathcal{H}} \{\phi(x)+g(x)\}
\end{align}
was studied by Lions and Mercier \cite{lions1979splitting}, who approached it
by solving the optimality condition $0\in \partial \phi(x)+ \partial g(x)$. Their proposed DR splitting method can be written as
\begin{align*}
    x_{n+1}= \frac{1}{2}x_n+ \frac{1}{2}(2\prox_{\gamma\phi} -\Id)\circ (2\prox_{\gamma g}-\Id)(x_n),
\end{align*}
where $\gamma>0$, $\Id:\mathcal{H} \to \mathcal{H}$ is the \textit{identity mapping} and the \textit{proximal mapping} of $\phi$ of coefficient $\gamma$ is defined as
$$\prox_{\gamma \phi}:\mathcal{H} \to \mathcal{H},\
x\mapsto \arg\min_{y\in \mathcal{H}}\{\phi(y)+\frac{1}{2\gamma}\|x-y\|^2\}.$$
For solving problem \eqref{COP}, one can also use the Peaceman-Rachford (PR) splitting method, initially introduced in
\cite{peaceman1955numerical} for differential equations and extended by Lions and Mercier \cite{lions1979splitting} to  nonlinear equations, defined in our setting as follows \begin{align*}
    x_{n+1}=  (2\prox_{\gamma\phi} -\Id)\circ (2\prox_{\gamma g}-\Id) (x_n),
\end{align*}
where $\gamma>0$.
In 2019, Wang and Wang \cite{wang2019parameterized} introduced a parameterized DR (PDR) algorithm in the convex setting, namely
\begin{align*}
    x_{n+1}=\left(\left(1-\frac{1}{\alpha}\right)\Id+\frac{1}{\alpha}(\alpha~\prox_{\gamma \phi}-\Id)\circ (\alpha~ \prox_{\gamma g}-\Id)\right) (x_n),
\end{align*}
where $\alpha\in (1,2]$ is a parameter that provides great flexibility and improves the speed of the algorithm, a very relevant feature in practical applications. The limit of the sequence delivered by PDR is the unique least norm optimal solution to \eqref{COP}. In 2021, Biana and Zhang \cite{bian2021parameterized} extended the PDR splitting method to nonconvex optimization problems of type
\begin{align*}
    \min_{x \in \mathcal{H}} \{f (x)+h(x)\},
\end{align*}
where at least one of $f,h:\mathcal{H}\to (-\infty, +\infty]$ is a nonconvex function. For an initial point $z_0\in \mathcal{H}$, this splitting method is defined as
\begin{align*}
    \begin{cases}
        x_{n+1}\in \arg\min_{x}\{ f(x)+\frac{1}{2\gamma}\|x-z_{n}\|^2\}\\
        y_{n+1}\in \arg \min_{y}\{h(y)+\frac{1}{2\gamma }\|y-\alpha x_{n+1}+z_n\|^2\}\\
        z_{n+1}= z_n+ (y_{n+1}-x_{n+1}),
    \end{cases}
\end{align*}
where $\gamma>0$ is the step size and $\alpha \in ({3}/{2}, 2]$. The authors established the convergence of the PDR algorithm in the mentioned case, showing by numerical examples that it outperforms the DR and PR methods in several aspects.

In 1964, Polyak \cite{polyak1964some} used an inertial term to define the heavy ball method to minimize a convex function. The advantage of the inertial term is that it uses the two previous iterates of the algorithm to define the next one, a procedure which can increase the convergence speed of the algorithm for a suitable choice of the inertial parameters. Recently, inertial forward-backward \cite{boct2016inertial} and inertial Douglas-Rachford \cite{feng2021inertial,dixit2022convergence} methods have been analyzed for nonconvex optimization problems.

In this paper, we propose an inertial Douglas-Rachford algorithm and a parametrized inertial Douglas-Rachford algorithm to solve  generalized DC programming problems of type \eqref{1.1}. We study the convergence of the proposed algorithms and perform numerical experiments that illustrate their behavior when solving sparse recovery and robust classification problems, highlighting that they outperform their state-of-the-art counterparts from the literature. Different to the similar Douglas-Rachford type methods available in the literature for solving \eqref{1.1} (see \cite[Algorithm 2]{HDO} or \cite[Algorithm 1]{YJS}), where the inertial parameters are allowed to take only certain values, in the algorithms we propose they can be freely chosen in a given interval, providing our approach more flexibility, which, in particular, can lead to superior performance.

The organization of the paper is as follows: Section \ref{section1} introduces the DC optimization problem and briefly recalls the state-of-the-art in the related literature. Section \ref{section2} provides the necessary preliminaries for making the paper self-contained. In Section \ref{section3}, we propose an inertial Douglas-Rachford algorithm and in Section \ref{section4} a parametrized inertial Douglas-Rachford algorithm for solving generalized DC programming problems, and study their convergence behavior. In Section \ref{section5}, we highlight the outcomes of numerical experiments meant to solve sparse recovery and robust classification problems using the proposed algorithms, and compare the performance of the latter against already known iterative methods.

\section{Preliminaries}\label{section2}
In this section, we introduce the definitions and results used in the rest of the paper. We denote by $\mathbb{R}$ the space of reals, by $\mathbb{N}$ the space of natural numbers, by $\mathcal{H}$ a real Hilbert space, by $Gr(T)$ the graph of operator $T: \mathcal{H} \to 2^\mathcal{H}$, by $x_n \to x$ the strong convergence of a sequence $\{x_n\}\subseteq \mathcal{H}$ towards $x\in \mathcal{H}$, and by $x_n \rightharpoonup x$ the weak convergence of $\{x_n\}$ towards $x$. A set-valued operator $T: \mathcal{H} \to 2^\mathcal{H}$ is said to be \textit{monotone} if
\begin{align*}
 \langle x-y,u-v\rangle \geq 0 \ \ \forall (x,u), (y,v) \in Gr(T)
\end{align*}
 and \textit{maximally monotone} if there exists no monotone operator $S:\mathcal{H} \to 2^\mathcal{H}$ such that $Gr(S)$ contains $Gr(T)$. The \textit{resolvent} of a monotone operator $T:\mathcal{H}\to 2^\mathcal{H}$ is $J_T= (\Id+T)^{-1}:\mathcal{H}\to 2^\mathcal{H}$. Consider $f:\mathcal{H} \to (-\infty,+\infty]$ be a proper function, then the \textit{subdifferential} of $f$ is denoted by $\partial f: \mathcal{H} \to 2^\mathcal{H}$ and defined for $x\in \mathcal{H}$ by
\begin{align*}
    \partial f(x)= \{u: f(y) \geq f(x) + \langle y-x ,u \rangle  \  \forall \  y \in \mathcal{H} \},
\end{align*}
when $f(x)\in \mathbb{R}$, while $\partial f(x)=\emptyset$ otherwise. In particular, $J_{t\partial f} = \prox_{tf}$ for all $t>0$. Moreover, one has, for $x, u\in \mathcal{H}$, $u=\prox_f(x)$ if and only if $x-u\in \partial f(u)$.

A proper function $f:\mathcal{H}\to (-\infty, +\infty]$ is said to be \textit{$\rho$-strongly convex} if $f-\frac{\rho}{2}\|\cdot\|^2$ is convex for $\rho>0$.

The following results are necessary in our study.

\begin{lemma}  [Alvarez] \label{lem2.1} \cite[Lemma 2.3]{Alvarez2003}
    Let $\{a_n\}_n$, $\{b_n\}_n$ and $\{\theta_n\}_n$ be sequences in $[0,+\infty)$ such that for all $n\in \mathbb{N}$, we have
    $$
        a_{n+1}\le a_n+\theta_n (a_n-a_{n-1})+b_n, ~\sum_{n=1}^{+\infty} b_n< +\infty
    $$
    and there exists $\theta \in \mathbb{R}$ with $0\le \theta_n \le \theta<1$ for all $n\in \mathbb{N}$.
     Then there exists $\bar a\in [0,+\infty)$ such that $a_n\to \bar a$.
\end{lemma}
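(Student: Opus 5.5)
The plan is to turn the recursion into a statement about the positive parts of the increments, show these are summable, and then deduce that $\{a_n\}_n$ is Cauchy-like in a monotone sense.

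\textbf{Step 1 (increments).} Set $\delta_n := a_n - a_{n-1}$ and $[t]_+ := \max\{t,0\}$. The hypothesis reads $\delta_{n+1} \le \theta_n \delta_n + b_n$. Since $0\le\theta_n\le\theta$, we have $\theta_n\delta_n \le \theta_n[\delta_n]_+ \le \theta[\delta_n]_+$. Because $b_n\ge 0$, this gives
$$[\delta_{n+1}]_+ \le \theta[\delta_n]_+ + b_n .$$

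\textbf{Step 2 (summability of positive parts).} Iterating the inequality of Step 1 yields
$$[\delta_{n+1}]_+ \le \theta^{n}[\delta_1]_+ + \sum_{k=1}^{n}\theta^{n-k}b_k .$$
Summing over $n$ and exchanging the order of summation in the double sum (a discrete Young/convolution bound) gives
$$\sum_{n}[\delta_{n+1}]_+ \le \frac{1}{1-\theta}\Big([\delta_1]_+ + \sum_k b_k\Big) < +\infty .$$
Alternatively, summing the inequality of Step 1 directly from $n=1$ to $N$ and absorbing $\theta\sum[\delta_n]_+$ on the left gives the same bound. This uses $\theta<1$, which is the only place it is needed.

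\textbf{Step 3 (convergence).} Define
$$w_n := a_n - \sum_{k=1}^{n}[\delta_k]_+ .$$
Then $w_{n+1}-w_n = \delta_{n+1}-[\delta_{n+1}]_+ \le 0$, so $\{w_n\}_n$ is nonincreasing. It is also bounded below by $-\sum_k[\delta_k]_+ > -\infty$, because $a_n\ge 0$. Hence $w_n$ converges. Since the partial sums $\sum_{k\le n}[\delta_k]_+$ converge as well, $a_n$ converges to some $\bar a$. Finally, $\bar a\ge 0$ because every $a_n\ge 0$.

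The only delicate point is the indexing at the start (the recursion involves $a_{n-1}$, so $a_0$ must be given) and the absorption step in Step 2. Both are routine, so I expect no real obstacle here.
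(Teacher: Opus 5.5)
Your proof is correct and follows the standard argument from Alvarez's original paper, which the present paper only cites and does not reproduce: the bound $[\delta_{n+1}]_+\le\theta[\delta_n]_+ + b_n$, summability of the positive parts using $\theta<1$, and then the nonincreasing, bounded-below sequence $a_n-\sum_{k\le n}[\delta_k]_+$. The only implicit requirement, which you rightly flag, is that $a_0$ be given so that the recursion makes sense at the first index.
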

\begin{lemma} [Opial] \cite{Opial1967} \label{lem2.2}
    Consider a nonempty subset $C$ of $\mathcal{H}$. Let $\{\tau_n\}_n$ be a sequence in $\mathcal{H}$ such that the following two conditions hold
    \begin{itemize}
        \item [(i)] for all $\zeta\in C$, $\lim_{n\to +\infty}\|\tau_n-\zeta\|$ exists,
        \item [(ii)] every sequential weak cluster point of $\{\tau_n\}_n$ is in $C$.
    \end{itemize}
    Then the sequence $\{\tau_n\}_n$ converges weakly to a point in $C$.
\end{lemma}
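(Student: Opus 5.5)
Since $\mathcal{H}$ is a Hilbert space, bounded sets are weakly sequentially compact, so we use the standard Opial argument. Condition (i) with $C\neq\emptyset$ gives boundedness of $\{\tau_n\}_n$: fix any $\zeta\in C$; the limit of $\|\tau_n-\zeta\|$ exists and is finite, so $\sup_n\|\tau_n\|<+\infty$. Hence every subsequence of $\{\tau_n\}_n$ has a further weakly convergent subsequence, and $\{\tau_n\}_n$ has at least one sequential weak cluster point. By (ii), all such points lie in $C$.

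It then suffices to show that the weak cluster point is unique. In that case every subsequence has a sub-subsequence converging weakly to the same point $\bar\tau\in C$, so the whole sequence converges weakly to $\bar\tau$. This is the standard subsequence principle: otherwise some $v\in\mathcal{H}$ and $\varepsilon>0$ would satisfy $|\langle \tau_{n_k}-\bar\tau,v\rangle|\ge\varepsilon$ along a subsequence, and extracting a weakly convergent sub-subsequence gives a contradiction.

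For uniqueness, let $\tau_{n_k}\rightharpoonup \zeta_1$ and $\tau_{m_k}\rightharpoonup\zeta_2$ with $\zeta_1,\zeta_2\in C$. Set $\ell_i=\lim_n\|\tau_n-\zeta_i\|$, which exist by (i). Expand
$$\|\tau_n-\zeta_1\|^2-\|\tau_n-\zeta_2\|^2 = 2\langle \tau_n,\zeta_2-\zeta_1\rangle+\|\zeta_1\|^2-\|\zeta_2\|^2 .$$
The left-hand side converges to $\ell_1^2-\ell_2^2$, so $\lim_n\langle\tau_n,\zeta_2-\zeta_1\rangle$ exists. Evaluating this limit along the two subsequences gives $\langle \zeta_1,\zeta_2-\zeta_1\rangle=\langle\zeta_2,\zeta_2-\zeta_1\rangle$, that is, $\|\zeta_1-\zeta_2\|^2=0$.

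The main obstacle, though a mild one, is this uniqueness step. The key trick is that the difference of squared distances is affine in $\tau_n$, which turns existence of norm limits into existence of a limit of inner products. Only two points of $C$ are needed, and they are precisely the cluster points supplied by (ii).
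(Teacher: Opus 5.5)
Your proof is correct and complete. You obtain boundedness from (i), reduce to uniqueness of the weak sequential cluster point, and use the affine identity $\|\tau_n-\zeta_1\|^2-\|\tau_n-\zeta_2\|^2=2\langle \tau_n,\zeta_2-\zeta_1\rangle+\|\zeta_1\|^2-\|\zeta_2\|^2$ to force $\zeta_1=\zeta_2$; this is the standard argument. The paper gives no proof of this lemma and simply cites Opial (the same argument appears in the Bauschke--Combettes monograph \cite{BAC}), so there is no competing route to compare against.
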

\vspace{1ex}
Also, for $x, y, z \in \mathcal{H}$ and $\beta \in \mathbb{R}$, the following identities are known (see, e.g., \cite{BAC})
\begin{align}
 & ~~~~~ \langle x - z, y - x \rangle= \frac{1}{2} \lVert z - y \rVert^{2} -
 \frac{1}{2} \lVert x - z \rVert^{2} - \frac{1}{2} \lVert y - x \rVert^{2},
 \label{id1} \\
 & \lVert \beta x + (1-\beta) y \rVert^{2} = \beta \lVert x \rVert^{2} + (1 -
 \beta) \lVert y\rVert^{2} - \beta(1 - \beta) \lVert x - y \rVert^{2}.
 \label{id2}
\end{align}

For the remainder of the paper, we work with the generalized DC problem \eqref{1.1}, where $\phi, g:\mathcal{H}\to (-\infty, +\infty]$ are proper, convex and lower semicontinuous functions, and $h:\mathcal{H} \to (-\infty, +\infty)$ is a Fr\'echet differentiable convex function with an $L$-Lipschitz continuous gradient $\nabla h$, with $L>0$. We assume that $\Omega_{GDCP}\neq \emptyset$, i.e. the set of stationary points of \eqref{1.1} contains at least an element.

Regarding $\Omega_{GDCP}$, one has the following characterization (see \cite[Proposition 3.1]{chuang2022unified}), for any fixed $\beta >0$,
\begin{equation}\label{eq5}
w\in \Omega_{GDCP} \iff \ \exists x\in \mathcal{H}:
\left\{
\begin{array}{l}
w = \prox_{\beta \phi} (x)\\
w= \prox_{\beta g} (2w - x + \beta \nabla h(w)).
\end{array}
\right.
\end{equation}
This reveals that determining a stationary point $w$ of \eqref{1.1} amounts to finding a point $x\in \mathcal{H}$ for which \eqref{eq5} holds. For $\beta >0$, the set (see \cite{YJS, HDO})
$$\Gamma_\beta =\left\{x\in \mathcal{H}|\ \exists w\in \Omega_{GDCP}: w = \prox_{\beta \phi} (x) = \prox_{\beta g} (2w - x + \beta \nabla h(w))\right\}$$
and, for $w\in \Omega_{GDCP}$, the set
$$\Gamma_\beta (w) =\left\{x\in \mathcal{H}|\ w = \prox_{\beta \phi} (x) = \prox_{\beta g} (2w - x + \beta \nabla h(w))\right\}$$
are nonempty due to the nonemptiness of $\Omega_{GDCP}$ and \eqref{eq5}. Obviously, $\Gamma_\beta = \cup_{w\in \Omega_{GDCP}}\Gamma_\beta (w)$.

\begin{remark}\label{re1}
Even though the functions $\phi$, $g$ and $h$ are assumed only convex, one can consider them, without loss of generality, in the following investigations to be strongly convex, more precisely, $\phi$ and $g$ can be taken $\rho$-strongly convex, and $h$ $2\rho$-strongly convex, for a fixed $\rho>0$. For computational reasons, we will take $\rho>L/2$. Then
$$\phi(x)+g(x)-h(x) = \left(\phi(x) + \frac{\rho}{2} \|x\|^2\right)+\left(g(x) + \frac{\rho}{2} \|x\|^2\right)-\left(h(x) + \rho \|x\|^2\right)\ \forall x\in \mathcal{H}.$$
Note also that, when $\phi$ and $g$ are $\rho$-strongly convex (without the trick mentioned above) and $\rho>L/2$, the set $\Omega_{GDPC}$ contains exactly one element, see \cite[Proposition 3.1]{HDO}. However, this conclusion does not necessarily hold when strong convexity is added to $\phi$ and $g$ are described above.
\end{remark}

\section{Inertial Douglas-Rachford DC algorithm}\label{section3}
In this section, we propose an inertial Douglas-Rachford algorithm for solving the generalized DC optimization problem \eqref{1.1}. Afterwards, we study the convergence properties of the proposed methods.



	\begin{algorithm}[H]
		\SetAlgoLined
				
		\KwIn{\begin{enumerate}
                \item Parameter $\beta>0$;
				\item initial points $x_0 \text{ and } x_1 \in \mathcal{H}$;
				\item sequences $\{\theta_n\}_n \subseteq (0,1) \text{ and } \{\kappa_n\}_n\subseteq (0,2)$.
				
			\end{enumerate}	
		}
		\SetKwBlock{For}{For}{}
		\SetKwProg{}{}{}{}
		\For($n\in \mathbb{N}$; ){
			$u_n=x_n+\theta_n(x_n-x_{n-1})$\\
     $y_n= \prox_{\beta \phi} (u_n)$\\
     $z_n=\prox_{\beta g}(2 y_n- u_n+\beta\nabla h(y_n))$\\
     $x_{n+1}= u_n+\kappa_n(z_n-y_n)$.
			}
		\KwOut{$x_{n+1},z_n \text{ and } y_n$.}		
		\caption{Inertial Douglas-Rachford DC algorithm (IDRDC) }\label{P4Al4.11}
	\end{algorithm}


\begin{remark}\label{re2}
In the literature, one can find some iterative methods (i.e. inertial Douglas-Rachford type algorithms) that look similar to Algorithm \ref{P4Al4.11}, however, there are some subtle differences between them that are highlighted below.
\cite[Algorithm 2]{HDO} has a similar definition; however, it allows only certain values for $\{\theta_n\}_n$ that are derived iteratively, as they depend on the other generated sequences, while Algorithm \ref{P4Al4.11} allows much more flexibility, as $\{\theta_n\}_n \subseteq (0,1)$ can  even be a priori fixed. This permits additional investigations on our algorithm in order to identify optimal choices of the inertial parameters. On the other hand, the inertial DR type algorithms proposed in \cite{YJS} contain an additional sequence to the ones required by our Algorithm \ref{P4Al4.11}, whose handling increases the running time and, consequently, the associated costs, as revealed by the numerical experiments presented in Section \ref{section5}. The mentioned feature brings Algorithm \ref{P4Al4.11} closer to the original inertial DR algorithm than its counterparts. Moreover, the construction we propose makes the coefficient of $x_n$ in the inertial term (where $u_n$ is defined) belong to the interval $(1,2)$, while in \cite[Algorithm 2]{YJS} it lies in $[0, 1)$. One could also mention \cite[Algorithm 1]{chuang2022unified} in this context; however, it is a ``pure'' DR type method, without inertial steps.
\end{remark}

Before giving the convergence statement for Algorithm \ref{P4Al4.11}, we need to show a technical statement.

\begin{lemma}\label{lem3.1}
    Considering $w \in \Omega_{GDPC}$, there exists a $y\in \Gamma_\beta$ such that the sequences generated by Algorithm \ref{P4Al4.11} satisfy
     \begin{align}\label{e13}
        (1+\beta\rho)\|y_n-w\|^2\le \langle y_n-w, u_n-y \rangle\ \forall n \in \mathbb{N}.
    \end{align}
\end{lemma}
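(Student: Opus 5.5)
Since $w\in\Omega_{GDCP}$, characterization \eqref{eq5} gives a point $y\in\mathcal{H}$ with $w=\prox_{\beta\phi}(y)$ and $w=\prox_{\beta g}(2w-y+\beta\nabla h(w))$. Thus $y\in\Gamma_\beta(w)\subseteq\Gamma_\beta$, and this is the point we will use. The whole argument only involves the $\phi$-step of Algorithm \ref{P4Al4.11}, namely $y_n=\prox_{\beta\phi}(u_n)$, together with the first relation $w=\prox_{\beta\phi}(y)$. It rests on the strong monotonicity of $\partial\phi$, which is available by Remark \ref{re1}: we may take $\phi$ to be $\rho$-strongly convex.

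First we translate the two proximal relations into subdifferential inclusions. By the characterization $u=\prox_f(x)\iff x-u\in\partial f(u)$ recalled in Section \ref{section2}, applied to $f=\beta\phi$, we get
\begin{align*}
u_n-y_n\in\beta\,\partial\phi(y_n),\qquad y-w\in\beta\,\partial\phi(w).
\end{align*}
Because $\phi$ is $\rho$-strongly convex, $\partial\phi$ is $\rho$-strongly monotone. This follows by writing $\phi=\psi+\frac{\rho}{2}\|\cdot\|^2$ with $\psi$ convex and using $\partial\phi=\partial\psi+\rho\Id$, together with the monotonicity of $\partial\psi$. Applying strong monotonicity to the two inclusions gives
\begin{align*}
\langle (u_n-y_n)-(y-w),\, y_n-w\rangle\ge \beta\rho\|y_n-w\|^2 .
\end{align*}

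Next we rearrange, writing $(u_n-y_n)-(y-w)=(u_n-y)-(y_n-w)$. This yields
\begin{align*}
\langle u_n-y,\,y_n-w\rangle\ge \|y_n-w\|^2+\beta\rho\|y_n-w\|^2=(1+\beta\rho)\|y_n-w\|^2,
\end{align*}
which is exactly \eqref{e13}. It holds for every $n\in\mathbb{N}$, since nothing about how $u_n$ is generated was used.

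The only delicate point is justifying the strong convexity assumption on $\phi$. Remark \ref{re1} changes $\phi$, $g$ and $h$ by quadratic terms, so the proximal maps in the algorithm, in \eqref{eq5} and in $\Gamma_\beta$ must all be those of the modified $\phi$. We take this convention to be in force throughout. The rest of the argument is a routine computation.
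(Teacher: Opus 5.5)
Your proof is correct and matches the paper's argument: both convert $y_n=\prox_{\beta\phi}(u_n)$ and $w=\prox_{\beta\phi}(y)$ into subgradient inclusions for $\phi$, apply the $\rho$-strong monotonicity of $\partial\phi$ from Remark \ref{re1}, and rearrange $(u_n-y_n)-(y-w)=(u_n-y)-(y_n-w)$. Your remark that the proximal maps must be those of the modified $\phi$ makes explicit a convention the paper leaves implicit.
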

\begin{proof}
    From the way Algorithm \ref{P4Al4.11} is defined, we have for a fixed $n\geq 1$ that (via \eqref{id2})
    \begin{align}\label{eq3.10}
        &y_n=(\Id+\beta \partial \phi)^{-1}(u_n)\nonumber\\
        \Rightarrow&\frac{1}{\beta}(u_n-y_n)\in \partial \phi(y_n).
    \end{align}
    and
    $$
        z_n=(\Id+\beta \partial g)^{-1}(2y_n-u_n+\beta \nabla h(y_n))$$
       \begin{align}\label{eq6} \Rightarrow \nabla h(y_n)+\frac{1}{\beta}(2y_n-u_n-z_n)\in \partial g (z_n).
    \end{align}
    Consider $w \in \Omega_{GDPC}$. Using relation \eqref{en1.3}, we have a $y \in \mathcal{H}$ such that
    \begin{align}\label{eq3.13}
        \frac{1}{\beta}(y-w)\in \partial \phi(w),
    \end{align} and
     \begin{equation}\label{eq2}
            \frac{1}{\beta}(w-y)+\nabla h(w) \in \partial g(w).
    \end{equation}
From \eqref{eq3.13} and \eqref{eq2}, one can derive
$y\in \Gamma_\beta (w)$.
    As $\phi$ is strongly convex (see Remark \ref{re1}), 
    \begin{align*}
        \rho\|y_n-w\|^2&\le \langle y_n-w,\theta-\alpha
        \rangle\ \forall \theta\in \partial \phi (y_n)\ \forall \alpha\in \partial\phi (w).
    \end{align*}
Using relations \eqref{eq3.10} and \eqref{eq3.13}, we have
    \begin{align*}
        \rho\|y_n-w\|^2&\le \frac{1}{\beta}\langle y_n-w, u_n-y_n-y+w\rangle\\
        &=\frac{1}{\beta}\langle y_n-w, u_n-y\rangle-\frac{1}{\beta}\|y_n-w\|^2,
    \end{align*}
    which implies \eqref{e13}.
\end{proof}
The convergence statement for Algorithm \ref{P4Al4.11} follows. Note that its outcomes are consistent with the ones obtained in the literature for similar iterative methods, like the ones mentioned in Remark \ref{re2}.

\begin{theorem}
Let $\{x_n\}_n$, $\{u_n\}_n$, $\{y_n\}_n$ and $\{z_n\}_n$ be sequences generated by Algorithm \ref{P4Al4.11}.
 When $\sum_{n=1}^{+\infty}\kappa_n(2-\kappa_n)< +\infty$, if $\{x_n\}_n$ verifies $\sum_{n=1}^{+\infty}\|x_n-x_{n-1}\|^2 < +\infty$, then it and $\{u_n\}_n$ converge weakly to the same point in $\Gamma_\beta$. Moreover, the sequences $\{y_n\}_n$ and $\{z_n\}_n$ converge strongly to the same stationary point of \eqref{1.1}.               \end{theorem}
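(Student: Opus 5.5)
Fix $w\in\Omega_{GDCP}$ and let $y\in\Gamma_\beta(w)$ be the point given by Lemma \ref{lem3.1}. The plan is to derive a Fejér-type inequality for $\|x_{n+1}-y\|^2$ in terms of $\|u_n-y\|^2$, and then pass to $\{x_n\}$ with Lemma \ref{lem2.1}. Strong convergence of $\{y_n\},\{z_n\}$ should come from the strong convexity estimates. Weak convergence of $\{x_n\}$ then follows from Lemma \ref{lem2.2}.

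\textbf{Fejér-type inequality.} From \eqref{eq6} and \eqref{eq2}, strong convexity of $g$ gives
$$\rho\|z_n-w\|^2\le \tfrac1\beta\langle z_n-w,\,2y_n-u_n-z_n-2w+y+\beta(\nabla h(y_n)-\nabla h(w))\rangle .$$
Adding \eqref{e13} and using Cauchy--Schwarz with the $L$-Lipschitz continuity of $\nabla h$ (or cocoercivity of $\nabla h$), we get an estimate of the form
$$\langle z_n-y_n,\,u_n-y\rangle\le -\|z_n-y_n\|^2-c\big(\|y_n-w\|^2+\|z_n-w\|^2\big)$$
for some $c>0$ depending on $\beta\rho$ and $\beta L$. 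This is where $\rho>L/2$ from Remark \ref{re1} is used: the term $\beta\langle z_n-w,\nabla h(y_n)-\nabla h(w)\rangle$ is bounded by $\tfrac{\beta L}{2}(\|z_n-w\|^2+\|y_n-w\|^2)$ and absorbed. Expanding $x_{n+1}-y=(u_n-y)+\kappa_n(z_n-y_n)$ then gives
$$\|x_{n+1}-y\|^2\le\|u_n-y\|^2-\kappa_n(2-\kappa_n)\|z_n-y_n\|^2-2c\kappa_n(\|y_n-w\|^2+\|z_n-w\|^2).$$

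\textbf{Passing to $\{x_n\}$.} By \eqref{id2},
$$\|u_n-y\|^2=(1+\theta_n)\|x_n-y\|^2-\theta_n\|x_{n-1}-y\|^2+\theta_n(1+\theta_n)\|x_n-x_{n-1}\|^2.$$
Set $a_n=\|x_n-y\|^2$ and $b_n=2\|x_n-x_{n-1}\|^2$, which is summable by hypothesis. This yields $a_{n+1}\le a_n+\theta_n(a_n-a_{n-1})+b_n$. Assuming $\theta_n\le\theta<1$ (implicit in the statement, needed for Lemma \ref{lem2.1}), we conclude that $\lim\|x_n-y\|$ exists. Since $\|u_n-x_n\|\to0$, the same limit holds for $\|u_n-y\|$. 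Telescoping the Fejér inequality then gives $\sum\kappa_n(\|y_n-w\|^2+\|z_n-w\|^2)<\infty$.

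\textbf{Main obstacle.} The assumption $\sum\kappa_n(2-\kappa_n)<\infty$ suggests the intended use is to control the $\kappa_n$-dependent cross terms. Converting the summable series above into $y_n\to w$ and $z_n\to w$ requires care, because $\kappa_n$ may tend to $0$ or $2$. I would first try to show directly that $\|y_n-w\|^2\le C\|u_n-y\|\cdot\|y_n-w\|$-type bounds plus nonexpansiveness of $\prox_{\beta\phi}$ make $\{\|y_n-w\|\}$ asymptotically regular. Failing that, I would note that $w$ is the unique stationary point in this strongly convex setting, and argue through cluster points.

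\textbf{Weak convergence of $\{x_n\}$ via Lemma \ref{lem2.2}.} Take $C=\Gamma_\beta$. Condition (i) holds because the argument above works for every $y\in\Gamma_\beta$, each attached to its own $w$; by uniqueness this $w$ is common. For condition (ii), let $x_{n_k}\rightharpoonup\bar x$. Then $u_{n_k}\rightharpoonup\bar x$, and $u_n-y_n\in\beta\partial\phi(y_n)$ with $y_n\to w$. Weak--strong closedness of the graph of the maximally monotone $\partial\phi$ gives $\bar x-w\in\beta\partial\phi(w)$, i.e.\ $w=\prox_{\beta\phi}(\bar x)$. Likewise, \eqref{eq6} with $z_n\to w$ and continuity of $\nabla h$ gives $w=\prox_{\beta g}(2w-\bar x+\beta\nabla h(w))$, so $\bar x\in\Gamma_\beta$. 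Lemma \ref{lem2.2} then yields $x_n\rightharpoonup\bar x\in\Gamma_\beta$, and $u_n\rightharpoonup\bar x$ because $u_n-x_n\to0$.
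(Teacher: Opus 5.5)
Your Fej\'er-type inequality is exactly the paper's \eqref{15} (your $2c$ equals $\beta(2\rho-L)$). Obtaining it by adding the two strong-monotonicity inequalities is cleaner than the paper's chain \eqref{e66}--\eqref{e68}. The Alvarez step, where you rightly note that $\sup_n\theta_n<1$ is needed, also coincides with the paper, as does the Opial argument via graph-closedness of $\partial\phi$ and $\partial g$.

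The genuine gap is the step you flag as the main obstacle. You never prove $y_n\to w$ and $z_n\to w$, and your check of condition (ii) in Lemma \ref{lem2.2} relies on it. Neither fallback you sketch can work, because with only $\sum_n\kappa_n(2-\kappa_n)<+\infty$ the claim fails in the setting both you and the paper actually use: $\phi,g$ $\rho$-strongly convex, $\nabla h$ $L$-Lipschitz, $2\rho>L$. Take $\mathcal{H}=\mathbb{R}$, $\phi=g=\frac{\rho}{2}|\cdot|^2$ and $h=\frac{L}{2}|\cdot|^2$ with $0<L<2\rho$. Then $\Omega_{GDCP}=\Gamma_\beta=\{0\}$ and $y_n=u_n/(1+\beta\rho)$. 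Moreover $x_{n+1}=(1-\gamma\kappa_n)u_n$ with $\gamma=\beta(2\rho-L)/(1+\beta\rho)^2<\frac12$. For $x_0=x_1=1$ and $\theta_n=\kappa_n=2^{-n}$, one checks that $\{x_n\}_n$ decreases to some $x_\infty\ge\frac38$. Both summability hypotheses hold, yet $y_n\to x_\infty/(1+\beta\rho)\neq0$ and $x_n\to x_\infty\notin\Gamma_\beta$.

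What is missing is a lower bound on the relaxation parameters. The paper closes this step in one line from \eqref{eq4}. Rearranged, \eqref{eq4} bounds $\kappa_n\beta(2\rho-L)(\|z_n-w\|^2+\|y_n-w\|^2)$ by $\|x_n-y\|^2-\|x_{n+1}-y\|^2+\theta_n(\|x_n-y\|^2-\|x_{n-1}-y\|^2)+\theta_n(1+\theta_n)\|x_n-x_{n-1}\|^2$, which tends to $0$. Deducing $\|y_n-w\|\to0$ and $\|z_n-w\|\to0$ from this tacitly uses that $\kappa_n$ stays away from $0$, as in \eqref{e15} of Proposition \ref{pr1}. The hypothesis $\sum_n\kappa_n(2-\kappa_n)<+\infty$ does not give this.

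So add $\liminf_n\kappa_n>0$, and use your inequality in this one-step form. Telescoping it, as you propose, would additionally require $\sum_n\max\{\|x_n-y\|^2-\|x_{n-1}-y\|^2,0\}<+\infty$; that comes from the proof of Lemma \ref{lem2.1}, not from its statement. With the lower bound, $y_n\to w$ and $z_n\to w$ follow at once, and the rest of your argument goes through as written.
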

\begin{proof}
Using the definition of Algorithm \ref{P4Al4.11}, for a $w\in \Omega_{GDCP}$ and a $y\in \Gamma_\beta (w)$, one has, for a fixed $n\ge 1$,

    \begin{align}\label{e66}
        \|x_{n+1}-y\|^2&=\|u_n+\kappa_n(z_n-y_n)-y\|^2\notag\\
        &=\left\|(1-\frac{\kappa_n}{2})(u_n-y)+\frac{\kappa_n}{2}(2z_n-2y_n+u_n-y)\right\|^2\notag \\
        &=\left(1-\frac{\kappa_n}{2}\right)\|u_n-y\|^2+\frac{\kappa_n}{2}\|2z_n-2y_n+u_n-y\|^2\notag\\
        &~~-\frac{\kappa_n}{2}\left(1-\frac{\kappa_n}{2}\right)\|2z_n-2y_n\|^2\notag \\
        & = \left(1-\frac{\kappa_n}{2}\right)\|u_n-y\|^2+\frac{\kappa_n}{2}\|2z_n-2y_n+u_n-y\|^2-\kappa_n\left(2-\kappa_n\right)\|z_n-y_n\|^2.
    \end{align}
    Since $g$ is $\rho$-strongly convex, from \eqref{eq3.10} and \eqref{eq2}  we have (via Lemma \ref{lem3.1})
    \[
   \rho\|z_n-w\|^2 \le \left\langle z_n-w,\sigma_1 - \sigma_2  \right\rangle ~\forall \sigma_1 \in \partial g(z_n),  \sigma_2  \in \partial g(w)
    \]
    followed by
  \begin{align}\label{eq3.7}
        \rho\|z_n-w\|^2 &\le \left\langle z_n-w, \nabla h(y_n)-\nabla h(w)+\frac{1}{\beta}(2y_n-u_n-z_n)-\frac{1}{\beta}(w-y)\right\rangle,
    \end{align}
    which implies that (via \eqref{id1})
    \begin{align*}
        2\beta\rho\|z_n-w\|^2&\le 2\beta\langle z_n-w, \nabla h(y_n)-\nabla h(w)\rangle+2\langle z_n-w, 2y_n-z_n-u_n +y-w\rangle \\
        &\le 2\beta L\|z_n-w\|\|y_n-w\|+ 2\langle z_n-w, 2y_n-z_n-u_n +y-w\rangle \\
        & \le \beta L \|z_n-w\|^2+\beta L\|y_n-w\|^2\\
        &~~+\|z_n-w\|^2+ \|2y_n-z_n-u_n+y-w\|^2-\|2z_n-2y_n+u_n-y\|^2,
    \end{align*}
    i.e.,
      $$  \|2z_n-2y_n+u_n-y\|^2\le -2\beta\rho\|z_n-w\|^2+\beta L\|z_n-w\|^2$$ $$+\|z_n-w\|^2+\beta L\|y_n-w\|^2
        +\|2y_n-z_n-u_n+y-w\|^2
 $$  
        \begin{equation}\label{e65}
        =(1+\beta L -2\beta \rho)\|z_n-w\|^2+\beta L\|y_n-w\|^2+\|2y_n-z_n-u_n+y-w\|^2.
        \end{equation}

    Now, using \eqref{e13},
    {\allowdisplaybreaks\begin{align}\label{e67}
        \|2y_n-z_n-u_n+y-w\|^2&= \|(y_n-z_n)+(y_n-w)-(u_n-y)\|^2\notag\\
        &=\|y_n-z_n\|^2+\|y_n-w\|^2+\|u_n-y\|^2+2\langle y_n-w, y_n-z_n\rangle\notag\\
        &~~-2\langle y_n-w, u_n-y\rangle-2\langle y_n-z_n, u_n-y\rangle\notag\\
        &= \|y_n-z_n\|^2+\|y_n-w\|^2+\|u_n-y\|^2+\|z_n-y_n\|^2+\|y_n-w\|^2\notag\\
        &~~-\|z_n-w\|^2-2\langle y_n-w, u_n-y\rangle -2\langle y_n-z_n, u_n-y\rangle\notag\\
        &\le 2\|y_n-z_n\|^2+\|u_n-y\|^2-\|z_n-w\|^2-2\beta \rho\|y_n-w\|^2\notag\\
        &~~-2\langle y_n-z_n, u_n-y\rangle\notag\\
        &\leq 2\|y_n-z_n\|^2+\|u_n-y\|^2-\|z_n-w\|^2\notag\\
        &~~-(2+4\beta\rho)\|y_n-w\|^2-2\langle y_n-z_n, u_n-y\rangle.
        \end{align}}
Since $g$ is $\rho$-strongly convex, from \eqref{eq3.7}, we deduce
\begin{align*}
    2\langle z_n-w, u_n-y\rangle & \le 2\beta \langle z_n-w, \nabla h(y_n)-\nabla h(w)\rangle+2\langle z_n-w, 2y_n-z_n-w\rangle-2\beta\rho \|z_n-w\|^2,
\end{align*}
    which implies that,
    \begin{align}\label{e68}
        2\langle z_n-w, u_n-y\rangle & \le 2\beta \langle z_n-w, \nabla h(y_n)-\nabla h(w)\rangle+2\langle z_n-w, y_n-z_n\rangle\notag\\
        &~~+2\langle z_n-w, y_n-w\rangle-2\beta\rho \|z_n-w\|^2\notag\\
        &\le \beta L \|z_n-w\|^2+\beta L \|y_n-w\|^2+2\langle z_n-w, y_n-z_n\rangle\notag\\
        &~~+2\langle z_n-w, y_n-w\rangle-2\beta\rho \|z_n-w\|^2\notag\\
        &=\beta (L-2 \rho) \|z_n-w\|^2+\beta L\|y_n-w\|^2+\|y_n-w\|^2-\|z_n-w\|^2\notag\\
        &~~-\|y_n-z_n\|^2+\|y_n-w\|^2+\|z_n-w\|^2-\|z_n-y_n\|^2\notag\\
        &= \beta( L-2\rho)\|z_n-w\|^2+(2+\beta L)\|y_n-w\|^2-2\|y_n-z_n\|^2.
    \end{align}
    So, we deduce from \eqref{e66}, \eqref{e67}, \eqref{e68} and \eqref{e65} that
\begin{align} &\|x_{n+1}-y\|^2 \le \left(1-\frac{\kappa_n}{2}\right)\|u_n-y\|^2-\kappa_n(2-\kappa_n)\|z_n-y_n\|^2\nonumber\\       
           &~~~+\frac{\kappa_n}{2}\left\{ (1+\beta L-2\beta \rho)\|z_n-w\|^2+\beta L\|y_n-w\|^2+2\|y_n-z_n\|^2+\|u_n-y\|^2\right.\nonumber\\
        &~~~\left. -\|z_n-w\|^2-2\beta\rho \|y_n-w\|^2-(2+2\beta \rho)\|y_n-w\|^2+(\beta L-2\beta \rho)\|z_n-w\|^2 \right.\nonumber\\
        &~~~\left. +(2+\beta L)\|y_n-w\|^2-2\|z_n-y_n\|^2 \right\}\nonumber\\
        &~~= \left(1-\frac{\kappa_n}{2}\right)\|u_n-y\|^2-\kappa_n(2-\kappa_n)\|z_n-y_n\|^2\nonumber\\
        &~~~+\frac{\kappa_n}{2}\left\{ 2( \beta L-2\beta \rho)\|z_n-w\|^2+2(\beta L-2\beta \rho)\|y_n-w\|^2+\|u_n-y\|^2\right\}\nonumber\\
        &~~= \|u_n-y\|^2-\kappa_n\beta(2\rho-L)\{\|z_n-w\|^2+\|y_n-w\|^2\}-\kappa_n(2-\kappa_n)\|z_n-y_n\|^2\label{15}\\
        &~~=(1+\theta_n)\|x_n-y\|^2-\theta_n\|x_{n-1}-y\|^2+\theta_n(1+\theta_n)\|x_n-x_{n-1}\|^2\nonumber\\
        &~~~-\kappa_n\beta(2\rho-L)\{\|z_n-w\|^2+\|y_n-w\|^2\}-\kappa_n(2-\kappa_n)\|z_n-y_n\|^2\label{eq4}\\
        &~~=\|x_n-y\|^2+\theta_n\{\|x_n-y\|^2-\|x_{n-1}-y\|^2\}+\theta_n(1+\theta_n)\|x_n-x_{n-1}\|^2\nonumber\\
        &~~~-\kappa_n\beta (2\rho-L)\{\|z_n-w\|^2+\|y_n-w\|^2\}-\kappa_n(2-\kappa_n)\|z_n-y_n\|^2\nonumber\\
        &~~\le\|x_n-y\|^2+\theta_n\{\|x_n-y\|^2-\|x_{n-1}-y\|^2\}+\theta_n(1+\theta_n)\|x_n-x_{n-1}\|^2,\label{eq3}
      \end{align}
\noindent as $\kappa_n \in (0,2)$ for all $n\in \mathbb{N}$ and $2\rho >L$. Using Lemma \ref{lem2.1}, the sequence $\{\|x_{n+1}-y\|\}_n$ converges. Similarly, one obtains that $\{\|u_{n}-y\|\}_n$ converges, too.

  Due to the boundedness of the sequence $\{x_{n}\}_n$, 
    there exists a subsequence $\{x_{n_j}\}_j$ of $\{x_n\}_n$ 
    such that $x_{n_j}\rightharpoonup \bar x$, 
    for some $\bar x\in\mathcal{H}$. By taking into account the way $\{u_n\}_n$ is defined, one can derive $u_{n_j}\rightharpoonup \bar x$, too.

Further, taking into account that $\|x_n-x_{n-1}\|\to 0$ (otherwise, the hypothesis $\sum_{n=1}^{+\infty}\|x_n-x_{n-1}\|^2 < +\infty$ would be violated), $L<2\rho$ and $\kappa_n\in (0, 2)$, \eqref{eq4} yields the convergence to zero of $\{\| z_n-y_n\|\}_n$, $\{\| z_n-w\|\}_n$ and $\{\| y_n-w\|\}_n$. Hence, $\{y_n\}_n$ and $\{z_n\}_n$ converge strongly toward the same element $w$ of $\Omega_{GDCP}$.

 Also, we have (via \eqref{eq3.10} and \eqref{eq6}) for all $j\in \mathbb{N}$
  $$  \frac{1}{\beta}(u_{n_j}-y_{n_j})\in \partial \phi(y_{n_j})$$
    and
   $$\nabla h(y_{n_j})+\frac{1}{\beta}(2y_{n_j}-u_{n_j}-z_{n_j})\in \partial g (z_{n_j}).$$
Employing \cite[Proposition 16.36]{BAC}, one obtains
     \begin{align*}
        &\frac{1}{\beta}(\bar{x}-w)\in \partial \phi (w)\Rightarrow
        \bar{x}\in w+\beta \partial \phi (w)\\
        & \Rightarrow \bar{x}\in (\Id+\beta \partial \phi )(w) \Rightarrow w= \prox_{\beta \phi}(\bar{x}),
    \end{align*}
    as well as
    \begin{align*}
      & \nabla h(w)+\frac{1}{\beta}(w-\bar{x})\in \partial g(w)
        \Rightarrow\bar{w}-\bar{x}+\beta \partial h(\bar{w})\in \beta \partial g(\bar{w})\\
        & \Rightarrow 2 w-\bar{x}+\beta \nabla h( w)\in (\Id+\beta \partial g)(w)
    \Rightarrow w= \prox_{\beta g}(2w-\bar{x}+\beta \nabla h(w)),
   \end{align*}
so $\bar x \in \Gamma_\beta (\bar{w})$. Lemma \ref{lem2.2} yields then that $x_{n}\rightharpoonup \bar x\in \Gamma_\beta (w) \subseteq \Gamma_\beta$, and the same conclusion can be drawn for $\{u_n\}_n$.

\end{proof}

We end the section by providing a result similar to \cite[Proposition 2.5]{Alvarez2003} for the sequence $\{x_n\}_n$ generated by Algorithm \ref{P4Al4.11}, meant to provide a convergence statement for it whose hypotheses do not depend on the generated sequences but only on the employed parameters.

\begin{proposition}\label{pr1}
Let $\{x_n\}_n$, $\{u_n\}_n$, $\{y_n\}_n$ and $\{z_n\}_n$ be sequences generated by Algorithm \ref{P4Al4.11}, and assume that
\begin{equation}\label{e15}
0<\underline\kappa\le\kappa_n\le\overline\kappa<2,
\end{equation}
and
\begin{equation}\label{e16}
 \bar\theta = \sup_{n\ge 1} \theta_n <1,\ \Theta=\sum_{n=1}^{+\infty}\theta_n<+\infty.
\end{equation}
Then $\sum_{n=1}^{+\infty}\|x_n-x_{n-1}\|^2 < +\infty$.
\end{proposition}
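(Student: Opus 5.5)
We follow the strategy of \cite[Proposition 2.5]{Alvarez2003}, now using the residual term $\kappa_n(2-\kappa_n)\|z_n-y_n\|^2$ in \eqref{15}, which so far has only been discarded. Fix $w\in\Omega_{GDCP}$ and $y\in\Gamma_\beta(w)$ as in the proof of the previous theorem. Since $x_{n+1}-u_n=\kappa_n(z_n-y_n)$, we have
$$\kappa_n(2-\kappa_n)\|z_n-y_n\|^2=\frac{2-\kappa_n}{\kappa_n}\|x_{n+1}-u_n\|^2\ge \frac{2-\overline\kappa}{\overline\kappa}\|x_{n+1}-u_n\|^2,$$
using \eqref{e15}. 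Set $\delta=(2-\overline\kappa)/\overline\kappa>0$, $\varphi_n=\|x_n-y\|^2$, and expand $\|u_n-y\|^2$ as in \eqref{eq4}. Dropping the nonpositive $\beta(2\rho-L)$ term, \eqref{15} yields
\begin{equation*}
\varphi_{n+1}-\varphi_n-\theta_n(\varphi_n-\varphi_{n-1})\le \theta_n(1+\theta_n)\|x_n-x_{n-1}\|^2-\delta\|x_{n+1}-u_n\|^2 .
\end{equation*}

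Next we bound $\|x_{n+1}-u_n\|^2$ from below in terms of $\|x_{n+1}-x_n\|^2$. Writing $x_{n+1}-u_n=(x_{n+1}-x_n)-\theta_n(x_n-x_{n-1})$ and applying Young's inequality,
$$\|x_{n+1}-u_n\|^2\ge(1-\theta_n)\|x_{n+1}-x_n\|^2+(\theta_n^2-\theta_n)\|x_n-x_{n-1}\|^2 .$$
Hence
$$\varphi_{n+1}-\varphi_n-\theta_n(\varphi_n-\varphi_{n-1})\le -\delta(1-\bar\theta)\|x_{n+1}-x_n\|^2+c\,\theta_n\|x_n-x_{n-1}\|^2,$$
with $c=2+\delta$ (since $\theta_n\le 1$).

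To exploit $\Theta<\infty$ without assuming beforehand that $\{\varphi_n\}_n$ is bounded, it is cleaner to bound the increments $\|x_n-x_{n-1}\|$ directly. The key step, and the one we expect to be the main obstacle, is to show that $\{\|x_n-x_{n-1}\|\}_n$ is bounded. For this we use $\|u_n-y\|\le(1+\theta_n)\|x_n-y\|+\theta_n\|x_{n-1}-y\|$ together with $\|x_{n+1}-y\|\le\|u_n-y\|$, which follows from \eqref{15}. This gives $\|x_{n+1}-y\|\le(1+\theta_n)\|x_n-y\|+\theta_n\|x_{n-1}-y\|$. With $M_n=\max\{\|x_n-y\|,\|x_{n-1}-y\|\}$ we get $M_{n+1}\le(1+2\theta_n)M_n$, and therefore $M_n\le M_1\prod_k(1+2\theta_k)\le M_1e^{2\Theta}$. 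Thus $\{\varphi_n\}_n$ is bounded by some $C$, and so is $\|x_n-x_{n-1}\|^2\le 4C$.

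Finally, sum the displayed inequality from $n=1$ to $N$. The left-hand side telescopes to $\varphi_{N+1}-\varphi_1-\sum_{n=1}^N\theta_n(\varphi_n-\varphi_{n-1})$, which is $\ge -\varphi_1-C\Theta$ because $|\varphi_n-\varphi_{n-1}|\le C$. The last term on the right is at most $4Cc\,\Theta$. Consequently
$$\delta(1-\bar\theta)\sum_{n=1}^N\|x_{n+1}-x_n\|^2\le \varphi_1+C\Theta+4cC\Theta,$$
uniformly in $N$, and letting $N\to+\infty$ gives $\sum_{n}\|x_n-x_{n-1}\|^2<+\infty$.
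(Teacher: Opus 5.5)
Your proof is correct, but after the boundedness step it follows a different route from the paper.

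The first half is essentially the paper's. Both proofs get boundedness of $\{x_n\}_n$ from the max-and-product trick $\prod_k(1+c\theta_k)\le e^{c\Theta}$. You work with norms, which gives the cleaner factor $1+2\theta_n$; the paper works with squared norms and the cruder $1+9\theta_n$.

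After that the paper uses two stages:
\begin{itemize}
\item It sums the inequality obtained from \eqref{15} to get $\sum_n\|z_n-y_n\|^2<+\infty$. Using $\kappa_n\le\overline\kappa$ and $x_{n+1}-u_n=\kappa_n(z_n-y_n)=d_{n+1}-\theta_nd_n$, this gives $\sum_n\|d_{n+1}-\theta_nd_n\|^2<+\infty$.
\item It then transfers summability to $\{d_n\}_n$ through the upper Young bound $\|d_{n+1}\|^2\le(1+\eta)\|d_{n+1}-\theta_nd_n\|^2+q\|d_n\|^2$, with contraction factor $q=\frac12(1+\bar\theta^2)<1$.
\end{itemize}
You merge the two stages into one energy inequality. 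You rewrite the residual as $\frac{2-\kappa_n}{\kappa_n}\|x_{n+1}-u_n\|^2$ and apply the reverse Young bound, so the decrease term $-\delta(1-\bar\theta)\|x_{n+1}-x_n\|^2$ appears directly. The leftover terms $\theta_n\|x_n-x_{n-1}\|^2$ and $\theta_n(\varphi_n-\varphi_{n-1})$ are then summable by boundedness and $\Theta<+\infty$.

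Your route has three advantages. It needs a single summation. It handles the inertial term explicitly, whereas the paper's line $\ldots\le\|e_n\|^2-\|e_{n+1}\|^2+C\theta_n$ tacitly drops $\theta_n\|e_n\|^2$; this is harmless only because of the boundedness already shown. It never uses $\underline\kappa>0$, since $(2-\kappa_n)/\kappa_n\ge(2-\overline\kappa)/\overline\kappa$ needs only $\kappa_n\le\overline\kappa<2$, so your argument also allows $\kappa_n\to 0$.

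The paper's route has its own advantages. It produces $\sum_n\|z_n-y_n\|^2<+\infty$ and $\sum_n(\|y_n-w\|^2+\|z_n-w\|^2)<+\infty$ as by-products. Its final contraction step is a reusable device that needs only $\bar\theta<1$ once $\sum_n\|d_{n+1}-\theta_nd_n\|^2<+\infty$ is known.
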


\begin{proof}
Define
$$ a_\kappa= \underline\kappa(2-\overline\kappa)>0 $$
and
$$ b_\rho= \beta\underline\kappa(2\rho-L)>0. $$

Then from \eqref{15}
\begin{equation}\label{e20}
\|x_{n+1}- y\|^2 \leq \|u_n-y\|^2-b_\rho(\|z_n-w\|^2+\|y_n-w\|^2)-a_\kappa\|z_n-y_n\|^2.
\end{equation}
For $n\geq 1$ denote $d_n=x_n-x_{n-1}$ and $e_n=x_n-y$. Then $u_n=x_n+\theta_n d_n$ and $ u_n-y=e_n+\theta_n d_n$, so
$$ \|u_n-y\|^2 = \|e_n\|^2 +2\theta_n\langle e_n,d_n\rangle +\theta_n^2\|d_n\|^2.$$
Using
$$ 2\langle e_n,d_n\rangle \leq \|e_n\|^2+\|d_n\|^2, $$
we then get
\begin{equation}\label{e21}
\|u_n-y\|^2 \leq (1+\theta_n) \|e_n\|^2 + \theta_n(\theta_n+1)\|d_n\|^2,
\end{equation}
so \eqref{e20} yields
\begin{equation}\label{e22}
\|e_{n+1}\|^2 \leq (1+\theta_n)\|e_n\|^2 + \theta_n(\theta_n+1)\|d_n\|^2-b_\rho(\|z_n-w\|^2+\|y_n-w\|^2)-a_\kappa\|z_n-y_n\|^2.
\end{equation}
As $ d_n=e_n-e_{n-1}$, we have
$$ \|d_n\|^2 \leq 2\|e_n\|^2+2\|e_{n-1}\|^2,$$
so
$$ \|e_{n+1}\|^2 \leq (1+3\theta_n+2\theta_n^2)\|e_n\|^2 + (2\theta_n+2\theta_n^2)\|e_{n-1}\|^2.$$

Denote $ M_n:=\max\{\|e_n\|^2,\|e_{n-1}\|^2\}$. As $ \theta_n\leq\bar\theta <1$,
we obtain
$$ M_{n+1} \leq \left(1+5\theta_n+4\theta_n^2\right)M_n\leq (1+9\theta_n)M_n,$$
where we took into account $\|e_n\|^2\leq M_{n+1} \leq (1+9\theta_n)M_n$. Then $ M_{n+1} \leq M_1 \prod_{k=1}^n (1+9\theta_k)$.
Using the inequality $1+t\leq e^t$ for all $t\geq 0$, one gets
$$ \prod_{k=1}^{n}(1+9\theta_k) \leq \prod_{k=1}^{n}e^{9\theta_k} = e^{9\sum_{k=1}^{n}\theta_k},$$
so
$$ M_{n+1} \leq M_1 e^{9\sum_{k=1}^{n}\theta_k}.$$
Due to \eqref{e16}, one gets
$$M_{n+1}\leq M_1e^{9\Theta},$$
so
$$ \sup\limits_{n\ge 1} M_n<+\infty.$$
Noting that
$$ M_n=\max\{\|x_n-\bar y\|^2,\|x_{n-1}- y\|^2\}, $$
one obtains then
$$ \sup\limits_{n\ge 1}\|x_n- y\|<+\infty,$$
so, consequently,
$$ \|d_n\| = \|x_n-x_{n-1}\| \leq \|x_n- y\|+\|x_{n-1}- y\|, $$
so
$$ \sup\limits_{n\ge 1} \|d_n\| < +\infty,$$
thus $\{d_n\}_n$ is bounded. Then there exists a $C>0$ such that
$$ (\theta_n+\theta_n^2)\|d_n\|^2 \leq C\theta_n. $$
Therefore,
$$b_\rho \left( \|y_n-\bar x\|^2+ \|z_n-\bar x\|^2 \right) + a_\kappa\|z_n-y_n\|^2 \leq \|e_n\|^2-\|e_{n+1}\|^2+C\theta_n,$$

and summing from $n=1$ to an $N>0$ one gets
$$ b_\rho\sum_{n=1}^N \left( \|y_n-\bar x\|^2+ \|z_n-\bar x\|^2 \right) + a_\kappa\sum_{n=1}^N \|z_n-y_n\|^2 \leq \|e_1\|^2+C\sum_{n=1}^N\theta_n.$$
So, since $\sum_{n\geq 1}\theta_n<+\infty$, \eqref{e22} yields
$$ \sum_{n=1}^{+\infty} \|z_n-y_n\|^2 < +\infty. $$

Consider again an $n\geq 1$. As $ x_{n+1}-u_n = \kappa_n(z_n-y_n)$, one has
$$ \|x_{n+1}-u_n\|^2 \leq \overline\kappa^2 \|z_n-y_n\|^2,$$
so
$$ \sum_{n=1}^{+\infty} \|x_{n+1}-u_n\|^2<+\infty.$$
But $ x_{n+1}-u_n = d_{n+1}-\theta_n d_n$, so
\begin{equation}\label{e23} \sum_{n=1}^{+\infty} \|d_{n+1}-\theta_n d_n\|^2 < +\infty. \end{equation}
From Young's weighted inequality, one has
$$ \|d_{n+1}\|^2 = \|(d_{n+1}-\theta_n d_n)+\theta_n d_n\|^2,$$
so, denoting $ \eta=\frac{2\bar\theta^2}{1-\bar\theta^2}$, one gets
$$ \|d_{n+1}\|^2 \leq (1+\eta) \|d_{n+1}-\theta_n d_n\|^2 + \left(1+\frac{1}{\eta}\right) \theta_n^2\|d_n\|^2. $$
Denoting $q= (1/2) (1+\bar\theta^2)$, as $\theta_n\leq\bar\theta$, one gets
$$ \left(1+\frac1\eta\right)\theta_n^2 \leq q<1,$$

\begin{equation}\label{e24} \|d_{n+1}\|^2 \leq (1+\eta) \|d_{n+1}-\theta_n d_n\|^2 + q\|d_n\|^2
\end{equation}
Summing in \eqref{e24} from $n=1$ to $N$ gives
$$ (1-q)\sum_{n=1}^N\|d_{n+1}\|^2 \leq (1+\eta) \sum_{n=1}^N \|d_{n+1}-\theta_n d_n\|^2 +\|d_1\|^2,$$
so, letting $N\to+\infty$ and using \eqref{e23}, one has
$$ \sum_{n=1}^{+\infty}\|d_n\|^2 = \sum_{n=1}^{+\infty} \|x_n-x_{n-1}\|^2 < +\infty.$$
\end{proof}

\begin{remark}
The hypotheses of Proposition \ref{pr1} are consistent with the ones of the convergence statements for similar algorithms from the literature; see \cite[Assumption 3.1] {HDO} and \cite[Assumption 2.3]{YJS}. Note also that, for example, the sequence generated by
$$ \theta_n=\frac{c}{(n+1)^{1+\varepsilon}},\ n\geq 0,$$
where  $c\ge 0$ and $\varepsilon>0$ satisfies \eqref{e16}.
\end{remark}

\section{Parametrized inertial Douglas-Rachford DC algorithm}\label{section4}

After providing in Section \ref{section3} an iterative method for solving the generalized DC optimization problem \eqref{en1.3}, we propose in this section a new algorithm which increases the parameters range and can be employed for solving a more general class of problems than \eqref{en1.3}(see \cite{bian2021parameterized,miao2024parameterized}). Before presenting and investigating the algorithm, we first introduce this general class of problems

In this section, we propose a parametrized method for solving problem \eqref{1.1} and study its convergence behavior. For a fixed $\alpha >0$, we consider the inclusion problem
\begin{align}\label{P3.2}
    \text{ find } x \in \mathcal{H} \text{ such that } \nabla h(x) \in \partial \phi (x) + \partial g(x) + (2-\alpha )(x),
\end{align}
whose solution set is denoted by
\begin{align}
    \Omega_{PDRDC}= \{x^*\in \mathcal{H}:\nabla h(x^*) \in \partial \phi (x^*) + \partial g(x^*) + (2-\alpha )(x^*) \}.
\end{align}
We assume $\Omega_{PDRDC}\neq \emptyset$.

\begin{remark}\label{re3}
When $\alpha=2$, \eqref{P3.2} turns into \eqref{en1.3}, so one can see the investigations in this section as a generalization of those in Section \ref{section3}. However, as one can see below, the extension we propose comes at a cost, so not everything in Section \ref{section3} can be recovered as a special case of what we obtain in this section.
\end{remark}

\begin{lemma}\label{L3.3}
    Given $ x^* \in \mathcal{H}$, one has $x^* \in \Omega_{PDRDC}$ if and only if there exists a $y\in \mathcal{H} $ satisfying
    \begin{enumerate}

         \item[(i)]   $ x^* = \prox_{\phi}(y)$

        \item[(ii)] $ x^* =\prox_{ g}(\alpha x^* -y+\nabla h(x^*))$.

    \end{enumerate}

\end{lemma}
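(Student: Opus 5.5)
The plan is to mirror the characterization \eqref{eq5} from \cite[Proposition 3.1]{chuang2022unified}, now with step $1$ and the factor $\alpha$ in place of $2$. The only tool is the resolvent characterization recalled in Section \ref{section2}: for a proper, convex, lower semicontinuous $f$, one has $u=\prox_f(x)$ if and only if $x-u\in\partial f(u)$. Both directions then reduce to translating (i) and (ii) into subdifferential inclusions and adding or splitting them.

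For the direction ``$\Leftarrow$'', assume $y\in\mathcal{H}$ satisfies (i) and (ii). From (i) we get $y-x^*\in\partial\phi(x^*)$. From (ii), applying the same characterization with input $\alpha x^*-y+\nabla h(x^*)$, we get
$$(\alpha x^*-y+\nabla h(x^*))-x^*\in\partial g(x^*),\quad\text{i.e.}\quad \nabla h(x^*)-(2-\alpha)x^*-(y-x^*)\in\partial g(x^*).$$
Adding this element to $y-x^*\in\partial\phi(x^*)$ gives
$$\nabla h(x^*)-(2-\alpha)x^*\in\partial\phi(x^*)+\partial g(x^*).$$
This is exactly \eqref{P3.2}, where $(2-\alpha)(x^*)$ is read as the vector $(2-\alpha)x^*$. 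Hence $x^*\in\Omega_{PDRDC}$.

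For the direction ``$\Rightarrow$'', let $x^*\in\Omega_{PDRDC}$. By the definition of the sum of set-valued operators, there exist $a\in\partial\phi(x^*)$ and $b\in\partial g(x^*)$ with
$$\nabla h(x^*)=a+b+(2-\alpha)x^*.$$
Define $y:=x^*+a$. Then $y-x^*=a\in\partial\phi(x^*)$, which is (i). Moreover,
$$(\alpha x^*-y+\nabla h(x^*))-x^*=(\alpha-2)x^*-a+\nabla h(x^*)=b\in\partial g(x^*),$$
so (ii) follows from the characterization of $\prox_g$.

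There is no real analytic obstacle here. The only point needing care is the decomposition in the ``$\Rightarrow$'' step: membership in $\partial\phi(x^*)+\partial g(x^*)$ is, by definition, membership in the Minkowski sum, so no subdifferential sum rule or constraint qualification is needed. The sign bookkeeping with $2-\alpha$ must also be tracked carefully. Finally, we note that when $\alpha=2$ and $\beta=1$ the lemma reduces to \eqref{eq5}, which serves as a consistency check.
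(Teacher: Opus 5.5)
Your proof is correct and follows essentially the same route as the paper: both directions translate (i) and (ii), via $u=\prox_f(x)\iff x-u\in\partial f(u)$, into the inclusions $y-x^*\in\partial\phi(x^*)$ and $(\alpha-1)x^*-y+\nabla h(x^*)\in\partial g(x^*)$, and then add or split them. Your explicit choice $y:=x^*+a$ from the Minkowski-sum decomposition just spells out the step that the paper compresses into ``by construction''.
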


\begin{proof}
    By construction, $x^*$ is a solution of \eqref{P3.2} if there exists $y\in \mathcal{H}$ such that
   $$y-x^* \in \partial\phi(x^*)  \text{ and } x^* -y+\nabla h(x^*)\in \partial g(x^*)+(2-\alpha)x^*,$$
   which is further equivalent to
$$
       y \in  x^* + \partial\phi(x^*)  \text{ and } \alpha x^* -y+\nabla h(x^*)\in x^*+\partial g(x^*)=(\Id+  \partial g)(x^*)$$
and, finally, to
\begin{equation}\label{eq66}
\left\{
\begin{array}{l}
x^* = \prox_{ \phi} (y)\\
x^*= \prox_{ g} (\alpha x^* - y +  \nabla h(x^*)).
\end{array}
\right.
\end{equation}
\end{proof}

Thus, finding a solution $x^* \in \Omega_{PDRDC}$ of \eqref{P3.2} amounts to finding a $y\in \mathcal{H}$ satisfying equation \eqref{eq66}. Define the set
$$\mathcal{Z}=\{y\in \mathcal{H}|\exists x^* \in  \Omega_{PDRDC}:x^* = \prox_{ \phi} (y)= \prox_{ g} (\alpha x^* - y +  \nabla h(x^*))\}$$ and, for $x^* \in  \Omega_{PDRDC}$, set
$$\mathcal{Z}(x^*)=\{y\in \mathcal{H}|x^* = \prox_{ \phi} (y)= \prox_{g} (\alpha x^* - y +  \nabla h(x^*))\},$$
which is nonempty via Lemma \ref{L3.3}, since $\Omega_{PDRDC}\neq \emptyset$. One can observe that $\mathcal{Z}=\cup_{x^* \in \Omega_{PDRDC}}\mathcal{Z}(x^*)$.

Next, we propose a parametrized inertial Douglas-Rachford DC Algorithm (PDRDC) for solving problem \eqref{P3.2} that is inspired by the method proposed in \cite[Lemma 3.4]{wang2019parameterized}.

	\begin{algorithm}[H]
		\SetAlgoLined

		\KwIn{\begin{enumerate}
                \item  $\alpha,\beta>0$ are real numbers such that $\alpha \in (1,2)$.
                \item Sequence $\{\theta_n\} \subseteq (0,1)$.
				\item Initial points $x_0 \text{ and } x_1 \in \mathcal{H}$.
			\end{enumerate}	
		}
		\SetKwBlock{For}{For}{}
		\SetKwProg{}{}{}{}
		\For($n\in \mathbb{N}$; ){
			$u_n=x_n+\theta_n(x_n-x_{n-1})$\\
     $y_n= \prox_{ \phi} (u_n)$\\
     $z_n=\prox_{g}(\alpha y_n- u_n+\nabla h(y_n))$\\
     $x_{n+1}= u_n+(z_n-y_n)$.
		}
		\KwOut{$x_{n+1},y_n \text{ and } z_n$.}		
		\caption{Parametrized Inertial Douglas-Rachford DC Algorithm (PDRDC) }\label{P4Al4.1}
	\end{algorithm}

Algorithm \ref{P4Al4.1} offers a wider range of parameter choices than Algorithm \ref{P4Al4.11} by making use of the term $\alpha$. The convergence analysis of Algorithm \ref{P4Al4.11} can be summarized in the following results.
The first of them is a technical statement similar to Lemma \ref{lem3.1}, adapted for Algorithm \ref{P4Al4.1}.

\begin{lemma}\label{lem4.2}
    Consider $w \in \Omega_{PDRDC}$ and $y \in \mathcal{Z}(w)$, then sequences $\{u_n\}_n \text{ and } \{y_n\}$ generated by Algorithm \ref{P4Al4.1} satisfy the following inequality
    \begin{equation}\label{eq23}
        (1+\rho)\|y_n -w\|^2 \leq \langle y_n-w,u_n-y \rangle\ \forall n \in \mathbb{N}
    \end{equation}
\end{lemma}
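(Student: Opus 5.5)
The argument mirrors the proof of Lemma \ref{lem3.1}, with the step size $\beta$ replaced by $1$, since Algorithm \ref{P4Al4.1} uses $\prox_{\phi}$ instead of $\prox_{\beta\phi}$. The plan is to read off one subgradient of $\phi$ at $y_n$ from the algorithm and one at $w$ from the definition of $\mathcal{Z}(w)$. We then insert both into the strong monotonicity inequality for $\partial\phi$ and rearrange.

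First, fix $n\in\mathbb{N}$. Since $y_n=\prox_{\phi}(u_n)=(\Id+\partial\phi)^{-1}(u_n)$, the characterization $u=\prox_f(x)\iff x-u\in\partial f(u)$ from Section \ref{section2} gives $u_n-y_n\in\partial\phi(y_n)$. Second, since $y\in\mathcal{Z}(w)$, we have $w=\prox_{\phi}(y)$, and the same characterization yields $y-w\in\partial\phi(w)$. This is the analogue of \eqref{eq3.13}, supplied here directly by Lemma \ref{L3.3} and the definition of $\mathcal{Z}(w)$. Note that the second prox condition involving $g$ and $\alpha$ is not needed for this lemma.

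Next, by Remark \ref{re1} we may take $\phi$ to be $\rho$-strongly convex. Then $\partial\phi$ is $\rho$-strongly monotone, meaning $\rho\|a-b\|^2\le\langle a-b,\xi-\zeta\rangle$ for all $\xi\in\partial\phi(a)$ and $\zeta\in\partial\phi(b)$. Applying this with $a=y_n$, $b=w$, $\xi=u_n-y_n$ and $\zeta=y-w$ gives
\begin{equation*}
\rho\|y_n-w\|^2\le\langle y_n-w,(u_n-y)-(y_n-w)\rangle=\langle y_n-w,u_n-y\rangle-\|y_n-w\|^2 .
\end{equation*}
Moving $\|y_n-w\|^2$ to the left-hand side yields \eqref{eq23}.

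There is no real obstacle; the only point requiring care is justifying the strong convexity of $\phi$ via Remark \ref{re1}. Under that modification, $\Omega_{PDRDC}$ and $\mathcal{Z}(w)$ should be understood with respect to the modified functions, exactly as the paper does in Lemma \ref{lem3.1}.
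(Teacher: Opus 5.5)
Your proof is correct and is essentially the paper's own argument. You extract $u_n-y_n\in\partial\phi(y_n)$ from the algorithm and $y-w\in\partial\phi(w)$ from $w=\prox_{\phi}(y)$, then apply the $\rho$-strong monotonicity of $\partial\phi$ (with the strong convexity of $\phi$ taken from Remark \ref{re1}, as the paper also implicitly does) and rearrange.
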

\begin{proof}
    Since $w \in \Omega_{PDRDC}$, from  \eqref{eq66}, we have
    \begin{align}\label{eqn24}
       & w=\prox_{\phi}(y) \iff (\Id+\partial \phi)(w) = y \iff y-w \in \partial \phi (w).
    \end{align}
   Let $n\in \mathbb{N}$. From Algorithm \ref{P4Al4.1}, we have
   \begin{align}\label{eqn25}
    &y_n =\prox_{\phi}(u_n) \iff y_n + \partial \phi (y_n) = u_n \iff u_n-y_n \in \partial{\phi} (y_n).
   \end{align}

Since $\phi$ is $\rho$-strongly convex, we have
\begin{align*}
    \rho\|y_n -w\|^2 \leq \langle y_n -w, \theta -\alpha \rangle \ \forall \theta \in \partial (y_n) \ \forall  \alpha \in \partial (w).
     \end{align*}
From equations \eqref{eqn24} and \eqref{eqn25}, one has
\begin{align}
    \rho\|y_n -w\|^2 &\leq \langle y_n-w, u_n-y_n-y+w \rangle\nonumber\\
    &= \langle y_n-w, u_n-y \rangle - \| y_n-w \|^2, \nonumber\end{align}
   which yields \eqref{eq23}.
\end{proof}

\begin{theorem}
Let $\{x_n\}_n$, $\{u_n\}_n$, $\{y_n\}_n$, $\{z_n\}_n$ be sequences generated by Algorithm \ref{P4Al4.1}. If $\sum_{n=1}^{+\infty}\|x_n-x_{n-1}\|^2 < +\infty$,
then $\{x_n\}_n$ and $\{u_n\}_n$ converge weakly to the same point in $\mathcal{Z}$. Moreover, the sequences $\{y_n\}_n$ and $\{z_n\}_n$ converge strongly to the same limit in $\Omega_{PDRDC}$.
\end{theorem}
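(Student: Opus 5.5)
We mirror the proof of the convergence theorem for Algorithm \ref{P4Al4.11}, now with $\beta=1$, $\kappa_n\equiv 1$ and the parameter $\alpha\in(1,2)$ in the second proximal step. Fix $w\in\Omega_{PDRDC}$ and $y\in\mathcal{Z}(w)$; by Lemma \ref{L3.3} these exist. The optimality conditions are
\[
u_n-y_n\in\partial\phi(y_n),\qquad \nabla h(y_n)+\alpha y_n-u_n-z_n\in\partial g(z_n),
\]
\[
y-w\in\partial\phi(w),\qquad \nabla h(w)+(\alpha-1)w-y\in\partial g(w).
\]
Since $x_{n+1}-y=(u_n-y)+(z_n-y_n)$, we expand
\[
\|x_{n+1}-y\|^2=\|u_n-y\|^2+2\langle u_n-y,z_n-y_n\rangle+\|z_n-y_n\|^2 .
\]
We then bound the cross term using the strong monotonicity of $\partial g$ and $\partial\phi$. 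For $\partial g$ this gives
\[
\rho\|z_n-w\|^2\le\langle z_n-w,\nabla h(y_n)-\nabla h(w)+\alpha(y_n-w)-(z_n-w)-(u_n-y)\rangle .
\]
For $\partial\phi$ we use Lemma \ref{lem4.2}, namely $(1+\rho)\|y_n-w\|^2\le\langle y_n-w,u_n-y\rangle$. Writing $\langle u_n-y,z_n-y_n\rangle=\langle u_n-y,z_n-w\rangle-\langle u_n-y,y_n-w\rangle$ and inserting both inequalities, the Lipschitz bound $2\langle z_n-w,\nabla h(y_n)-\nabla h(w)\rangle\le L(\|z_n-w\|^2+\|y_n-w\|^2)$ and the polarization identity \eqref{id1} should lead to an estimate of the form
\[
\|x_{n+1}-y\|^2\le\|u_n-y\|^2-c_1\|z_n-w\|^2-c_2\|y_n-w\|^2-c_3\|z_n-y_n\|^2 .
\]
The constants $c_i$ depend on $\rho,L,\alpha$. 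We would show they are nonnegative, and positive for $c_1,c_2$, by invoking Remark \ref{re1}: we may take $\rho$ as large as needed, in particular $\rho>L/2$, with an extra margin compensating the $\alpha$-dependent terms.

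This bookkeeping is the main obstacle. The term $\alpha\langle z_n-w,y_n-w\rangle$ with $\alpha<2$ no longer telescopes as nicely as in the case $\alpha=2$, where $\langle z_n-w,2(y_n-w)\rangle$ combined exactly with $\|z_n-y_n\|^2$. We would split it as
\[
\alpha\langle z_n-w,y_n-w\rangle=\tfrac{\alpha}{2}\left(\|z_n-w\|^2+\|y_n-w\|^2-\|z_n-y_n\|^2\right).
\]
If $c_3$ then turns out negative, we would absorb $\|z_n-y_n\|^2\le 2\|z_n-w\|^2+2\|y_n-w\|^2$ into the strictly negative $c_1,c_2$ terms, again choosing $\rho$ large enough.

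Once the estimate holds, expanding $\|u_n-y\|^2=(1+\theta_n)\|x_n-y\|^2-\theta_n\|x_{n-1}-y\|^2+\theta_n(1+\theta_n)\|x_n-x_{n-1}\|^2$ via \eqref{id2} gives the Alvarez-type recursion. The hypothesis $\sum\|x_n-x_{n-1}\|^2<+\infty$ makes $b_n=2\|x_n-x_{n-1}\|^2$ summable, so Lemma \ref{lem2.1} yields convergence of $\|x_n-y\|$; the same then follows for $\|u_n-y\|$ because $\|u_n-x_n\|\to0$. Rearranging the recursion, its right-hand side tends to zero, which forces $\|z_n-w\|\to0$ and $\|y_n-w\|\to0$. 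Hence $y_n,z_n\to w\in\Omega_{PDRDC}$ strongly.

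For weak convergence, boundedness gives a weakly convergent subsequence $x_{n_j}\rightharpoonup\bar x$, and $u_{n_j}\rightharpoonup\bar x$ as well. We pass to the limit in the two inclusions using the weak–strong closedness of the graphs of subdifferentials \cite[Proposition 16.36]{BAC}, together with continuity of $\nabla h$. This gives $\bar x-w\in\partial\phi(w)$ and $\nabla h(w)+\alpha w-\bar x-w\in\partial g(w)$, that is, $\bar x\in\mathcal{Z}(w)$. Since the limit $\lim\|x_n-y\|$ exists for every $y\in\mathcal{Z}(w)$, Opial's Lemma \ref{lem2.2} applied with $C=\mathcal{Z}(w)$ yields $x_n\rightharpoonup\bar x\in\mathcal{Z}$, and likewise $u_n\rightharpoonup\bar x$.
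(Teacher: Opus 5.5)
Your route is the paper's. You use the same four inclusions, and Lemma \ref{lem4.2} together with strong monotonicity of $\partial g$ and the Lipschitz bound to get a descent inequality. Then come Lemma \ref{lem2.1}, strong convergence of $\{y_n\}_n$ and $\{z_n\}_n$, weak--strong closedness of the subdifferential graphs, and Opial's lemma. The paper writes $x_{n+1}-y=(1-\frac1\alpha)(u_n-y)+\frac1\alpha(\alpha z_n-\alpha y_n+u_n-y)$ and applies \eqref{id2}, whereas you expand $\|(u_n-y)+(z_n-y_n)\|^2$ directly; this difference is cosmetic. Your limit inclusion $\nabla h(w)+(\alpha-1)w-\bar x\in\partial g(w)$ is the correct one; the paper's text at that point writes the $\alpha=2$ version.

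The weak point is that you leave the central estimate as ``should lead to'' and plan to secure the signs of $c_1,c_2,c_3$ by taking $\rho$ as large as needed via Remark \ref{re1}. That device is not available, for two reasons. First, shifting quadratics as in Remark \ref{re1} changes $\prox_\phi$, $\prox_g$ and $\nabla h$, hence the iterates. Second, adding $\rho'$ to the strong convexity modulus also adds $2\rho'$ to the Lipschitz constant of $\nabla h$, so the margin $2\rho-L$ cannot be enlarged this way.

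Fortunately no margin is needed: carrying out your own split closes exactly. Set $a=z_n-w$, $b=y_n-w$, $c=u_n-y$. Your two monotonicity inequalities read $2\langle c,a\rangle\le 2\langle a,\nabla h(y_n)-\nabla h(w)\rangle+2\alpha\langle a,b\rangle-2(1+\rho)\|a\|^2$ and $-2\langle c,b\rangle\le-2(1+\rho)\|b\|^2$. Use your polarization $2\alpha\langle a,b\rangle=\alpha(\|a\|^2+\|b\|^2-\|a-b\|^2)$ and the bound $2\langle a,\nabla h(y_n)-\nabla h(w)\rangle\le L(\|a\|^2+\|b\|^2)$. Then
\begin{align*}
\|x_{n+1}-y\|^2&=\|c\|^2+2\langle c,a\rangle-2\langle c,b\rangle+\|a-b\|^2\\
&\le\|c\|^2-(2+2\rho-\alpha-L)\left(\|a\|^2+\|b\|^2\right)-(\alpha-1)\|z_n-y_n\|^2,
\end{align*}
which is exactly \eqref{21}. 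Hence $c_1=c_2=2+2\rho-\alpha-L>0$ and $c_3=\alpha-1>0$ under just $2\rho>L$ and $\alpha\in(1,2)$. The absorption contingency never arises, and the rest of your argument goes through as written.
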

\begin{proof}
Let $n\geq 1$.
From the definition of Algorithm \ref{P4Al4.1}, we have for a $w\in \Omega_{PDRDC}$ and a $y\in \mathcal{Z}(w)$
    \begin{align}
        \|x_{n+1}-y\|^2&=\|u_n+(z_n-y_n)-y\|^2\nonumber\\
        &=\left\| \left(1-\frac{1}{\alpha}\right)(u_n-y)+\frac{1}{\alpha}(\alpha z_n-\alpha y_n+u_n-y)\right\|^2\nonumber\\
        &=\left(1-\frac{1}{\alpha}\right)\|u_n-y\|^2+\frac{1}{\alpha}\|\alpha z_n-\alpha y_n+u_n-y\|^2
        \nonumber\\
       & -\frac{1}{\alpha}\left(1-\frac{1}{\alpha}\right)\|\alpha z_n-\alpha y_n\|^2\label{eqn18}.
    \end{align}
Like in the proof of Lemma \ref{lem3.1}, one has (from the definition of Algorithm \ref{P4Al4.1})
\begin{equation}\label{e3.19}
    u_n-y_n\in \partial \phi(y_n)\end{equation}
    \begin{equation}\label{e3.20}
    \alpha y_n - u_n + \nabla h(y_n) -z_n \in \partial g(z_n).\end{equation}

   As $w\in \Omega_{PDRDC}$ and $y\in \mathcal{Z}(w)$, it holds (via Lemma \ref{L3.3})
\begin{equation}\label{e3.21}
    \left\{ \begin{array}{l}
  y-w \in \partial \phi(w)\\
  \nabla h(w)-y + (\alpha-1)w \in \partial g(w).
                \end{array}\right.
\end{equation}

    Using the $\rho$-strong convexity of $g$, equations \eqref{e3.20} and \eqref{e3.21} and the Lipschitz-continuity of $\nabla h$, we have
    \begin{align*}
        2\rho \|z_n-w\|^2&\le 2\langle z_n-w, \nabla h(y_n)-\nabla h(w)+\alpha y_n-z_n-u_n-(\alpha-1)w+y\rangle \\
        &=2\langle z_n-w, \nabla h(y_n)-\nabla h(w) \rangle \\
        &~~+ 2\langle z_n-w,\alpha y_n-\alpha z_n+y-u_n+(\alpha-1)(z_n-w)\rangle \\
        &= 2\langle z_n-w, \nabla h(y_n)-\nabla h(w) \rangle \\
        &~~+\frac{2}{\alpha}\langle \alpha (z_n-w), \alpha y_n+y-u_n+(1-\alpha)w-z_n\rangle \\
        &= 2\langle z_n-w, \nabla h(y_n)-\nabla h(w) \rangle \\
        &~~+\frac{2}{\alpha}\langle \alpha (z_n-w), \alpha y_n+y-u_n-\alpha w\rangle -\frac{2}{\alpha}\langle \alpha (z_n-w), z_n-w\rangle \\
        &\leq L\|z_n-w\|^2+L\|y_n-w\|^2-\frac{1}{\alpha}\left\{ \|\alpha y_n-\alpha z_n+y-u_n\|^2\right\} \\
        &~~+\frac{1}{\alpha}\left\{ \|\alpha(z_n-w)\|^2+\|\alpha y_n+y-u_n-\alpha w\|^2\right\}-2\|z_n-w\|^2,\\
           \end{align*}
    which implies that
    \begin{align}
        \frac{1}{\alpha}&\|\alpha y_n-\alpha z_n+y-u_n\|^2\nonumber\\
        &\le L\|z_n-w\|^2+L\|y_n-w\|^2 +\alpha \|z_n-w\|^2+\frac{1}{\alpha}\|\alpha y_n-\alpha w+y-u_n\|^2\nonumber\\
        &~~-2\|z_n-w\|^2-2\rho \|z_n-w\|^2\nonumber\\
        &= (\alpha +L -2-2\rho )\|z_n-w\|^2+\frac{1}{\alpha}\|\alpha y_n+y-u_n-\alpha w\|^2+L\|y_n-w\|^2.\label{3.13}
    \end{align}
    Now, also employing Lemma \ref{lem4.2}, one has
    \begin{align}
        \|\alpha y_n+y-u_n-\alpha w\|^2
        &~~=\|\alpha (y_n-w)-(u_n-y)\|^2\nonumber\\
        &~~=\alpha^2\|y_n-w\|^2+\|u_n-y\|^2-2\alpha \langle y_n-w, u_n-y\rangle \nonumber\\
        &~~\leq \alpha^2\|y_n-w\|^2+\|u_n-y\|^2-2\alpha (1+\rho)\|y_n-w\|^2.\label{3.14}
    \end{align}

  From \eqref{3.13} and \eqref{3.14}, we have
  \begin{align}
      \frac{1}{\alpha}\|\alpha y_n-\alpha z_n+y-u_n\|^2&\le (\alpha +L -2-2\rho )\|z_n-w\|^2+\alpha\|y_n-w\|^2\nonumber\\
      &~~+\frac{1}{\alpha}\|u_n-y\|^2-2(1+\rho)\|y_n-w\|^2+L\|y_n-w\|^2.\nonumber
  \end{align}
  Now, plugging this value in \eqref{eqn18}, we have
  \begin{align}
      \|x_{n+1}-y\|^2&\le \left(1-\frac{1}{\alpha}\right)\|u_n-y\|^2+(\alpha+L-2-2\rho)\|z_n-w\|^2+\alpha \|y_n-w\|^2\nonumber\\
      &~~+\frac{1}{\alpha}\|u_n-y\|^2-2(1+\rho)\|y_n-w\|^2+L\|y_n-w\|^2-(\alpha-1)\|z_n- y_n\|^2\nonumber\\
      &=\|u_n-y\|^2 +(\alpha+L-2-2\rho)(\|z_n-w\|^2+ \|y_n-w\|^2)\label{21}\\
      &~~-\left(\alpha -1\right)\| z_n- y_n\|^2\nonumber\\
      &\le\|(1+\theta_n)(x_n-y)-\theta_n(x_{n-1}-y)\|^2\nonumber\\
      &~~+(\alpha+L-2-2\rho)(\|z_n-w\|^2+ \|y_n-w\|^2)\nonumber\\
      &= (1+\theta_n)\|x_n-y\|^2-\theta_n\|x_{n-1}-y\|^2+\theta_n(1+\theta_n)\|x_n-x_{n-1}\|^2\nonumber\\
      &~~+(\alpha+L-2-2\rho)(\|z_n-w\|^2+ \|y_n-w\|^2)\nonumber\\
      &=\|x_n-y\|^2+\theta_n\{\|x_n-y\|^2-\|x_{n-1}-y\|^2\}+\theta_n(1+\theta_n)\|x_n-x_{n-1}\|^2\nonumber\\
      &~~+(\alpha+L-2-2\rho)(\|z_n-w\|^2+ \|y_n-w\|^2).\label{eq7}\\
     &\leq\|x_n-y\|^2+\theta_n\{\|x_n-y\|^2-\|x_{n-1}-y\|^2\}+\theta_n(1+\theta_n)\|x_n-x_{n-1}\|^2,\nonumber\\    \end{align}
 as $\alpha <2$ and $2\rho >L$. Using Lemma \ref{lem2.1}, the sequence $\{\|x_{n+1}-y\|\}_n$ converges. Similarly, one obtains that $\{\|u_{n}-y\|\}_n$ converges, too.

Due to the boundedness of the sequence $\{x_{n}\}_n$,   there exists a subsequence $\{x_{n_j}\}_j$ of $\{x_n\}_n$     such that $x_{n_j}\rightharpoonup \bar x$
    for some $\bar x\in\mathcal{H}$. By taking into account the way $\{u_n\}_n$ is defined, one can derive $u_{n_j}\rightharpoonup \bar x$, too.

Further, taking into account that $\|x_n-x_{n-1}\|\to 0$ (otherwise, the hypothesis $\sum_{n=1}^{+\infty}\|x_n-x_{n-1}\|^2 < +\infty$ would be violated), $L<2\rho$ and $\alpha <2$, \eqref{eq7} yields the convergence to zero of $\{\| z_n-w\|\}_n$ and $\{\| y_n-w\|\}_n$. Hence, $\{y_n\}_n$ and $\{z_n\}_n$ converge strongly toward the same element $w$ of $\Omega_{PDRDC}$.

Also, we have (via \eqref{e3.19} and \eqref{e3.20}) for all $j\in \mathbb{N}$
  $$  u_{n_j}-y_{n_j}\in \partial \phi(y_{n_j})$$
    and
   $$\nabla h(y_{n_j})+\alpha y_{n_j}-u_{n_j}-z_{n_j}\in \partial g (z_{n_j}).$$
Employing \cite[Proposition 16.36]{BAC}, one obtains
     $\bar{x}-w\in \partial \phi (w)$, which leads to $w= \prox_{\phi}(\bar{x})$ as well as
    $\nabla h(w)+w-\bar{x}\in \partial g(w)$, which yields
     $w= \prox_{ g}(2w-\bar{x}+ \nabla h(w))$,
  so $\bar x \in \mathcal{Z} (w)$. The lemma \ref{lem2.2} then yields $x_{n}\rightharpoonup \bar x\in \mathcal{Z}(w) \subseteq \mathcal{Z}$, and the same conclusion can be drawn for $\{u_n\}_n$.

\end{proof}

Next we discuss about hypotheses that guarantee $\sum_{n=1}^{+\infty}\|x_n-x_{n-1}\|^2 < +\infty$ for the sequence
$\{x_n\}_n$ generated by Algorithm \ref{P4Al4.1}.

\begin{proposition}\label{pr2}
Let $\{x_n\}_n$, $\{u_n\}_n$, $\{y_n\}_n$ and $\{z_n\}_n$ be sequences generated by Algorithm \ref{P4Al4.1}, and assume that

\begin{equation}\label{e16.1}
 \bar\theta = \sup_{n\ge 1} \theta_n <1,\ \Theta=\sum_{n=1}^{+\infty}\theta_n<+\infty.
\end{equation}
Then $\sum_{n=1}^{+\infty}\|x_n-x_{n-1}\|^2 < +\infty$.
\end{proposition}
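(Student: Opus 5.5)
The argument follows the proof of Proposition \ref{pr1} almost verbatim, with \eqref{21} in place of \eqref{15}. Fix $w\in\Omega_{PDRDC}$ and $y\in\mathcal{Z}(w)$. Then \eqref{21} gives, for every $n\ge 1$,
\begin{equation*}
\|x_{n+1}-y\|^2\le \|u_n-y\|^2-b\left(\|z_n-w\|^2+\|y_n-w\|^2\right)-a\|z_n-y_n\|^2,
\end{equation*}
where $a=\alpha-1>0$ (since $\alpha\in(1,2)$) and $b=2+2\rho-\alpha-L>0$ (since $\alpha<2$ and $2\rho>L$). Compared with Proposition \ref{pr1}, the constants $a_\kappa,b_\rho$ are replaced by $a,b$. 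The relaxation parameter is now fixed at $1$, so no analogue of \eqref{e15} is needed, and $x_{n+1}-u_n=z_n-y_n$ exactly.

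\emph{Step 1 (boundedness).} Set $d_n=x_n-x_{n-1}$ and $e_n=x_n-y$. As in \eqref{e21}, we have $\|u_n-y\|^2\le(1+\theta_n)\|e_n\|^2+\theta_n(1+\theta_n)\|d_n\|^2$. Combining this with $\|d_n\|^2\le 2\|e_n\|^2+2\|e_{n-1}\|^2$ and dropping the nonpositive terms gives $\|e_{n+1}\|^2\le(1+3\theta_n+2\theta_n^2)\|e_n\|^2+(2\theta_n+2\theta_n^2)\|e_{n-1}\|^2$. With $M_n=\max\{\|e_n\|^2,\|e_{n-1}\|^2\}$ and $\theta_n\le\bar\theta<1$, this yields $M_{n+1}\le(1+9\theta_n)M_n$. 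Hence $M_{n+1}\le M_1e^{9\Theta}$ by \eqref{e16.1}. It follows that $\{e_n\}_n$ is bounded, hence so is $\{d_n\}_n$, and $\theta_n(1+\theta_n)\|d_n\|^2\le C\theta_n$ for some $C>0$.

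\emph{Step 2 (summability of the residuals).} Inserting this bound into the main inequality gives
\begin{equation*}
a\|z_n-y_n\|^2\le \|e_n\|^2-\|e_{n+1}\|^2+\theta_n\|e_n\|^2+C\theta_n .
\end{equation*}
Here the term $\theta_n\|e_n\|^2$ must be kept, since it comes from the factor $1+\theta_n$; it is at most $C'\theta_n$ by the boundedness from Step 1. Telescoping from $1$ to $N$ and using $\Theta<+\infty$ yields $\sum_n\|z_n-y_n\|^2<+\infty$. This in turn gives $\sum_n\|x_{n+1}-u_n\|^2<+\infty$, that is, $\sum_n\|d_{n+1}-\theta_nd_n\|^2<+\infty$.

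\emph{Step 3 (conclusion).} Apply the weighted Young inequality with $\eta=2\bar\theta^2/(1-\bar\theta^2)$ and $q=(1+\bar\theta^2)/2<1$:
\begin{equation*}
\|d_{n+1}\|^2\le(1+\eta)\|d_{n+1}-\theta_nd_n\|^2+q\|d_n\|^2 .
\end{equation*}
Summing from $1$ to $N$ gives $(1-q)\sum_{n=1}^N\|d_{n+1}\|^2\le(1+\eta)\sum_{n=1}^N\|d_{n+1}-\theta_nd_n\|^2+\|d_1\|^2$. Letting $N\to+\infty$ yields the claim.

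The only delicate point is Step 1. The inequality \eqref{21} must be used without the $\theta_n$-differences that feed Lemma \ref{lem2.1}, since that lemma already presupposes the conclusion we want. The boundedness therefore has to come from the product estimate driven by $\sum_n\theta_n<+\infty$. We also need to check that $b>0$ really holds under the paper's standing choice $\rho>L/2$.
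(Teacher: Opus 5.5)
Your proposal is correct and follows the paper's proof essentially step for step. It uses the same constants $2-\alpha+2\rho-L$ and $\alpha-1$ read off from \eqref{21}, the same $M_n$ product bound driven by $\sum_n\theta_n<+\infty$, the same telescoping to get $\sum_n\|z_n-y_n\|^2<+\infty$, and the same weighted Young argument with $\eta$ and $q$; and by keeping the $\theta_n\|e_n\|^2$ term explicitly and writing the factor as $1+9\theta_n$, you actually tidy up two small slips in the paper's write-up, where that term is dropped without comment and the factor is misprinted as $1+\theta_n$.
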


\begin{proof}
  From inequality \eqref{21}, one can find
  \begin{align}\label{eqn4.43}
          \|x_{n+1}-y\|^2 &\leq \|u_n-y\|^2 +(\alpha+L-2-2\rho)(\|z_n-w\|^2+ \|y_n-w\|^2)\\
&~~-\left(\alpha -1\right)\| z_n- y_n\|^2.\nonumber
    \end{align}
    Denoting $$b_{\rho}=(2-\alpha+ 2 \rho-L)$$
    $$\alpha_k=\alpha -1,$$
    we can rewrite \eqref{eqn4.43} as
      \begin{align}\label{eqn4.431}
          \|x_{n+1}-y\|^2 &\leq \|u_n-y\|^2 -b_{\rho}(\|z_n-w\|^2+ \|y_n-w\|^2)-\alpha_k\| z_n- y_n\|^2 \nonumber.
    \end{align}
    For $n\geq 1$, consider $d_n=x_n-x_{n-1}, e_n =x_n -y, u_n =x_n +\theta_n d_n \text{ and } u_n-y=e_n+\theta_n d_n$ so,

 \begin{align}
      \|u_n-y\|^2 &= \|x_n+\theta_n (x_n -x_{n-1})-y)\|^2\nonumber\\
       &= \|e_n +\theta_n d_n\|^2 \nonumber\\
       &=\|e_n\|^2+\theta_n^2 \|d_n\|^2+2\theta_n\langle e_n, d_n\rangle.\nonumber
 \end{align}
  Since, $$2\langle e_n, d_n \rangle \leq \|e_n\|^2 + \|d_n\|^2,$$
  we get,
  \begin{align}
      \|u_n -y\|^2 &\leq \|e_n\|^2 + \theta_n^2 \|d_n\|^2 +\theta_n \|e_n\|^2+\theta_n \|d_n\|^2\nonumber\\
      &= (1+\theta_n)\|e_n\|^2 +\theta_n (1+\theta_n)\|d_n\|^2. \nonumber
  \end{align}
  From \eqref{eqn4.43} we obtain,
  \begin{align}
      \|e_{n+1}\|^2 &\leq (1+\theta_n)\|e_n\|^2 +\theta_n(1+\theta_n)\|d_n\|^2-b_{\rho} (\|z_n-w\|^2+\|y_n-w\|^2)-\alpha_k \|z_n-y_n\|^2. \nonumber
  \end{align}
  Since $d_n=e_n-e_{n-1},$ we have, $$\|d_n\|^2\leq 2\|e_n\|^2+2\|e_{n-1}\|^2$$
  which inforce
  \begin{align}
      \|e_{n+1}\|^2 &\leq (1+\theta_n)\|e_n\|^2 +\theta_n (1+\theta_n)\|d_n\|^2\nonumber\\
      &=(1+\theta_n)\|e_n\|^2+\theta_n (1+\theta_n)\{2\|e_n^2 +2\|e_{n-1}\|^2\}\nonumber\\
      &= (1+\theta_n+2\theta_n+2\theta_n^2)\|e_n\|^2+(2\theta_n +2\theta_n^2)\|e_{n-1}\|^2. \nonumber
  \end{align}
Denote,
      $$M_n=\max\{\|e_n\|^2, \|e_{n-1}\|^2\},$$ we obtain
      $$M_{n+1}\leq (1+5\theta_n+4 \theta_n^2)M_n \leq (1+\theta_n)M_n,$$
where we took into account $$\|e_n\|^2\leq M_{n+1}\leq (1+9\theta_n)M_n.$$
Then, $M_{n+1}\leq M_1 \prod_{k=1}^{n}(1+9\theta_k).$
Using the inequality, $1+t\leq e^t, \forall t\geq 0,$ one gets
$$\prod_{k=1}^{n}(1+9\theta_k)\leq \prod_{k=1}^{n} e^{9 \theta_k}=e^{9\sum_{k=1}^{n}\theta_k}.$$
So, $$M_{n+1}\leq M_1 e^{9\sum_{k=1}^{n}\theta_k},$$
which implies $$M_{n+1} \leq M_1 e^{9\Theta}.$$
So, $$\sup_{n\geq 1}M_n < +\infty.$$
Noting that $$M_n =\max\{\|x_n-y\|^2,\|x_{n-1}-y\|^2\}$$
one obtains then $$\sup_{n \geq 1} \|x_n-y\| < +\infty.$$
So cosequently, $$\|d_n\|=\|x_n -x_{n-1}\|\leq \|x_n-y\|+\|x_{n-1}-y\|$$
one obtains then $$\sup_{n\geq 1}\|x_n-y\|< +\infty.$$
So consequently, $$\|d_n\|=\|x_n-x_{n-1}\|\leq \|x_n-y\|+\|x_{n-1}-y\|$$
so, $$\sup_{n\geq 1}\|d_n\| <+\infty,$$
thus, $\{d_n\}$ is bounded. Then there exists a $C >0$ such that $$(\theta_n+\theta_n^2)\|d_n\|^2 \leq C\theta_n.$$
Therefore, $$b_p(\|y_n-\bar{x}\|^2+\|z_n -\bar{x}\|^2)+\alpha_k \|z_n-y_n\|^2\le \|e_n\|^2-\|e_{n+1}\|^2+C\theta_n,$$
summing from $n=1$ to an $N>0$ one gets,
\begin{equation}\label{eq4.46}
 b_p\sum_{n=1}^{N}(\|y_n-\bar{x}\|^2+\|z_n -\bar{x}\|^2)+ \alpha_k \sum_{n=1}^{N}\|z_n-y_n\|^2\leq \|e_1\|^2+C\sum_{n=1}^{N} \theta_n.
\end{equation}
Since $\sum_{n\geq 1}\theta_n < +\infty,$ so \eqref{eq4.46} yields $$\sum_{n=1}^{\infty}\|z_n-y_n\|^2 <+\infty.$$
Consider again an $n\geq 1.$ As $x_{n+1}-u_n=z_n-y_n,$ one has
$$\|x_{n+1}-u_n\|^2=\|z_n-y_n\|^2$$
so, $$\sum_{n=1}^{\infty}\|x_{n+1}-u_n\|^2 <+\infty.$$
But, $x_{n+1}-u_n=d_{n+1}-\theta_n d_n $ so,
\begin{equation}\label{eqn4.48}
 \sum_{n=1}^{\infty}\|d_{n+1}-\theta_n d_n\|^2< +\infty.
\end{equation}
From Young's weighted inequality, one has
$$\|d_{n+1}\|^2=\|(d_{n+1}-\theta_n d_n)+\theta_n d_n\|^2$$
so, denoting $\eta= \frac{2\bar{\theta}^2}{1-\bar{\theta}^2},$ one gets
$$\|d_{n+1}\|^2 \leq (1+\eta)\|d_{n+1}-\theta_n d_n\|^2+(1+\frac{1}{\eta})\theta_n^2 \|d_n\|^2.$$
Denoting $q=\frac{1}{2}(1+\bar{\theta}^2)$ as $\theta_n \leq \bar{\theta},$ one gets
$$(1+\frac{1}{\eta})\theta_n^2 \leq q <1,$$
\begin{equation}\label{eqn4.47}
    \|d_{n+1}\|^2\leq (1+\eta)\|d_{n+1}-\theta_nd_n\|^2+q\|d_n\|^2.
\end{equation}
Summing in \eqref{eqn4.47} from $n=1$ to $N$ gives
$$(1-q)\sum_{n=1}^{N}\|d_{n+1}\|^2 \leq(1+\eta)\sum_{n=1}^{N}\|d_{n+1}-\theta_n d_n\|^2+\|d_1\|^2.$$
So, letting $N\to +\infty$ and using \eqref{eqn4.48}, one has
$$\sum_{n=1}^{+\infty}\|d_n\|^2=\sum_{n=1}^{+\infty}\|x_n -x_{n-1}\|^2 < +\infty.$$
\end{proof}
\section{Numerical Experiments}\label{section5}
In this section, we conduct numerical experiments to illustrate the performance of the proposed algorithms in solving the sparse recovery problem. We have also compared the proposed algorithms with \cite[Algorithm 1]{YJS}, and the computational results reveal their superior performance. All the numerical experiments have been performed in \textsc{MATLAB} 2024a-Win64 on a laptop with Intel(R) Core(TM) i5-7200U CPU @ 2.50GHz  2.70 GHz with 8 GB RAM and 64-bit operating system.


\subsection{Experiment 1}
For the numerical experiment, we have considered the $\ell^1$-$\ell^2$ sparse recovery problem \cite{chuang2022unified}
\begin{equation}
    \min_{x\in \mathbb{R}^k} \left\{\frac{1}{2}\|Ax-b\|^2+\lambda(\|x\|_1 -\|x\|_2)\right\}.
\end{equation}

 We have constructed the matrix $A \in \mathbb{R}^{m\times k}$ by using (i) random Gaussian, and (ii) partial DCT matrices satisfying RIP conditions with high probability \cite{candes2006stable}. Throughout the experiment, we have chosen $\theta_n=(n-1)/(2n+2.5)$ $n\ge 1$, $\kappa=0.3$ and $x_0=x_1=0_k$ as initial iterations.
 To have a stopping criterion relevant regardless of the scale of the employed data, we used $\|x_n - x_{n-1}\|/{\max{\{x_n,x_{n-1}\}}} < 10^{-6}$.

In the first experiment, we consider a matrix $A$ of order $64 \times 256$ with sparsity 10. We plotted the convergence behavior of the Algorithm \ref{P4Al4.11} and compared it with \cite[Algorithm 1]{YJS} for $\beta \in \{10^{-1},10^{-2}\}$. In Figure \ref{Fig6.3}, we plotted the graph representing the value of $\|A\cdot-b\|$ depending on the number of iterations of the employed method, and one can see that $\|A\cdot-b\|$ is smaller for Algorithm \ref{P4Al4.11} than when using \cite[Algorithm 1]{YJS} for both $\beta=10^{-1}$ and $\beta=10^{-2}$. 

\begin{figure}[h]
    \centering
    \subfloat[]{%
        \resizebox*{6cm}{!}{\includegraphics{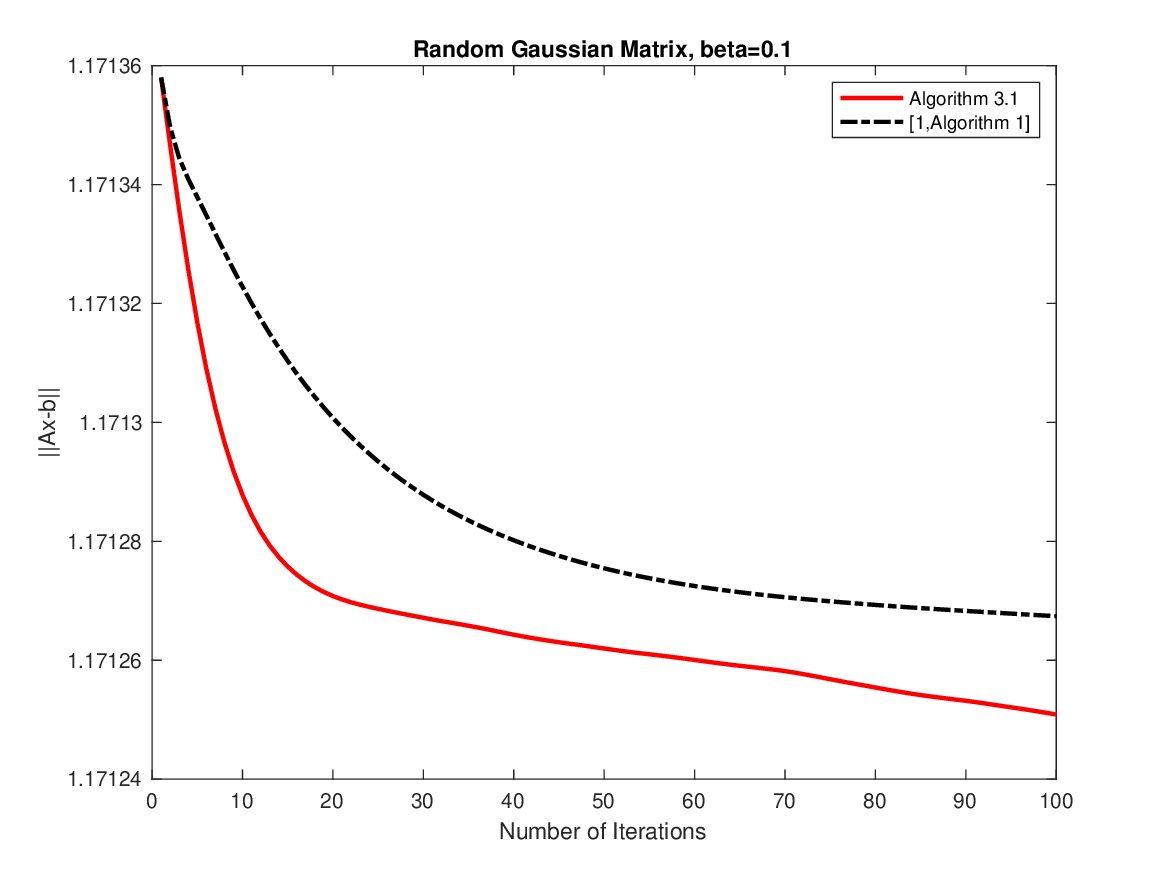}}}\hspace{5pt}
    \label{Fig6.1}
    \subfloat[]{
        \resizebox*{6cm}{!}{\includegraphics{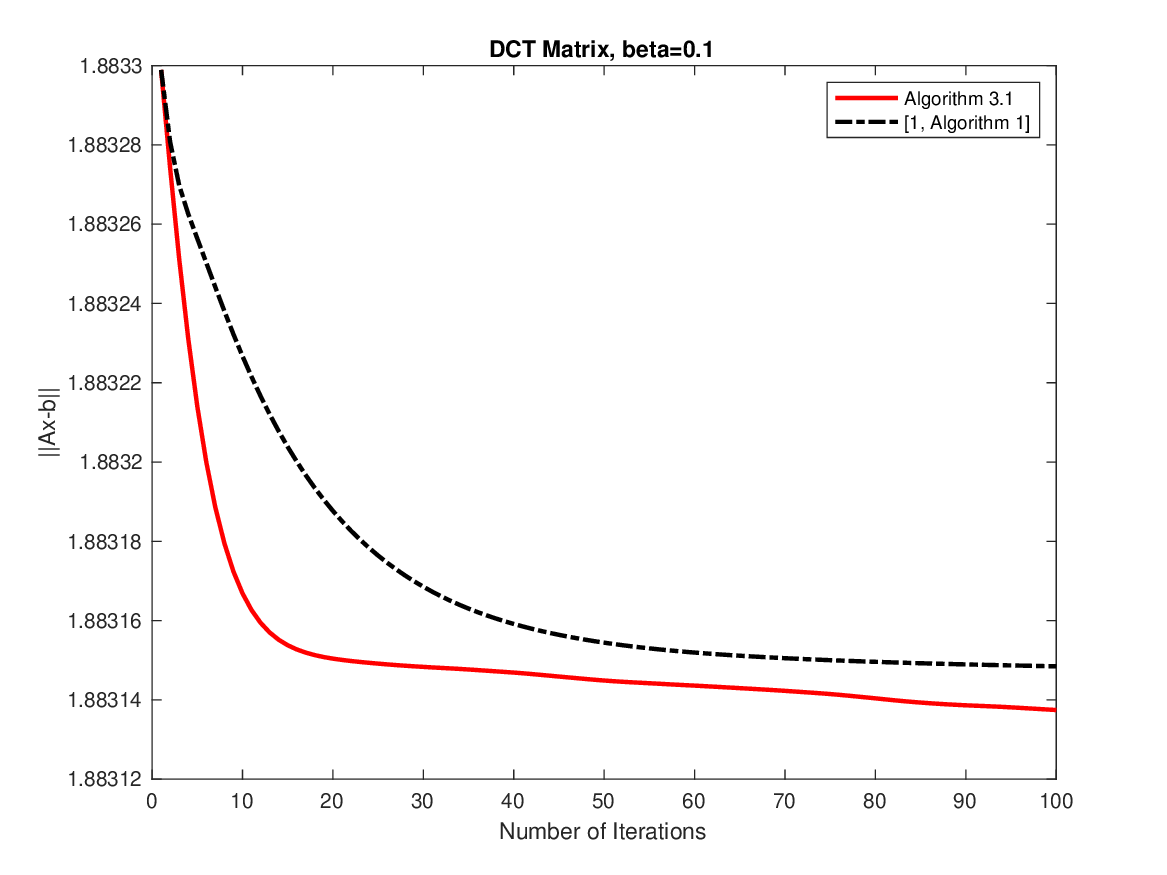}}}
    \label{Fig6.2}
      \subfloat[]{%
        \resizebox*{6cm}{!}{\includegraphics{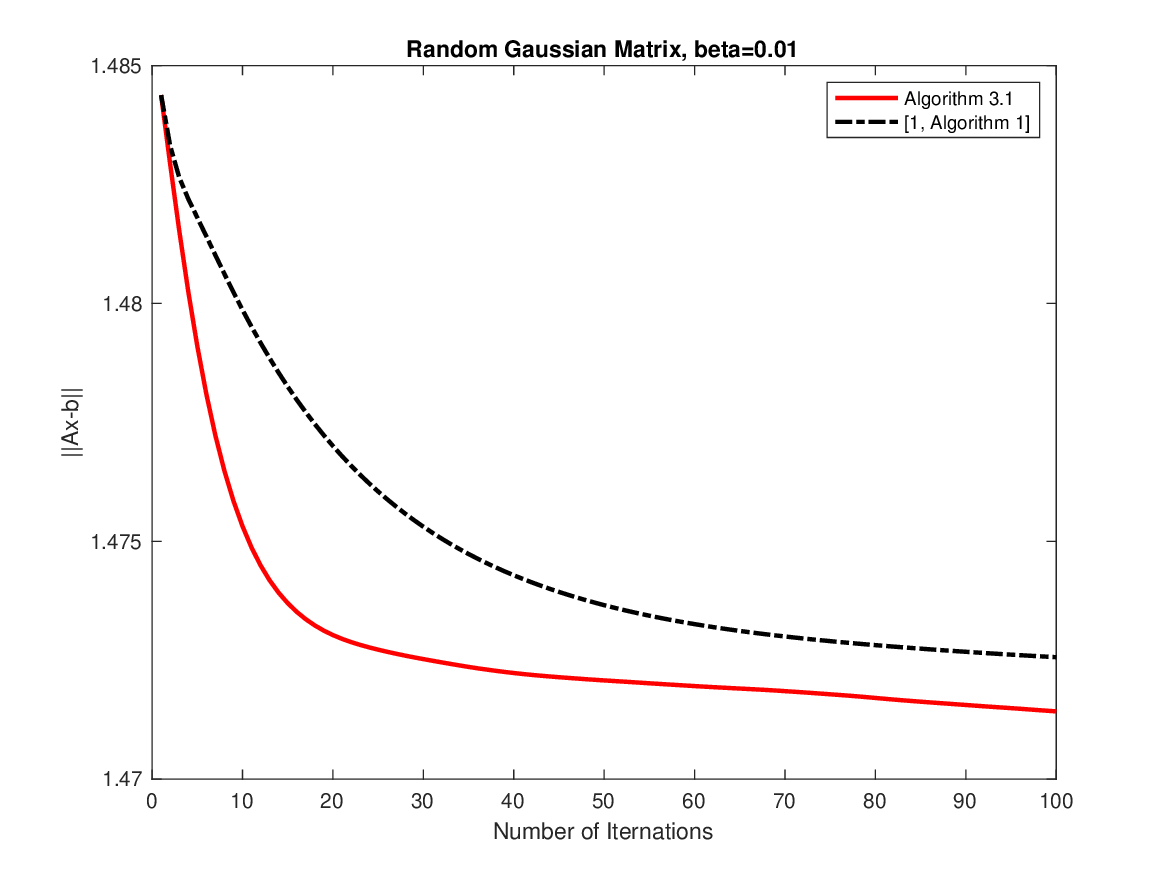}}}\hspace{5pt}
    \label{Fig6.7}
    \subfloat[]{
        \resizebox*{6cm}{!}{\includegraphics{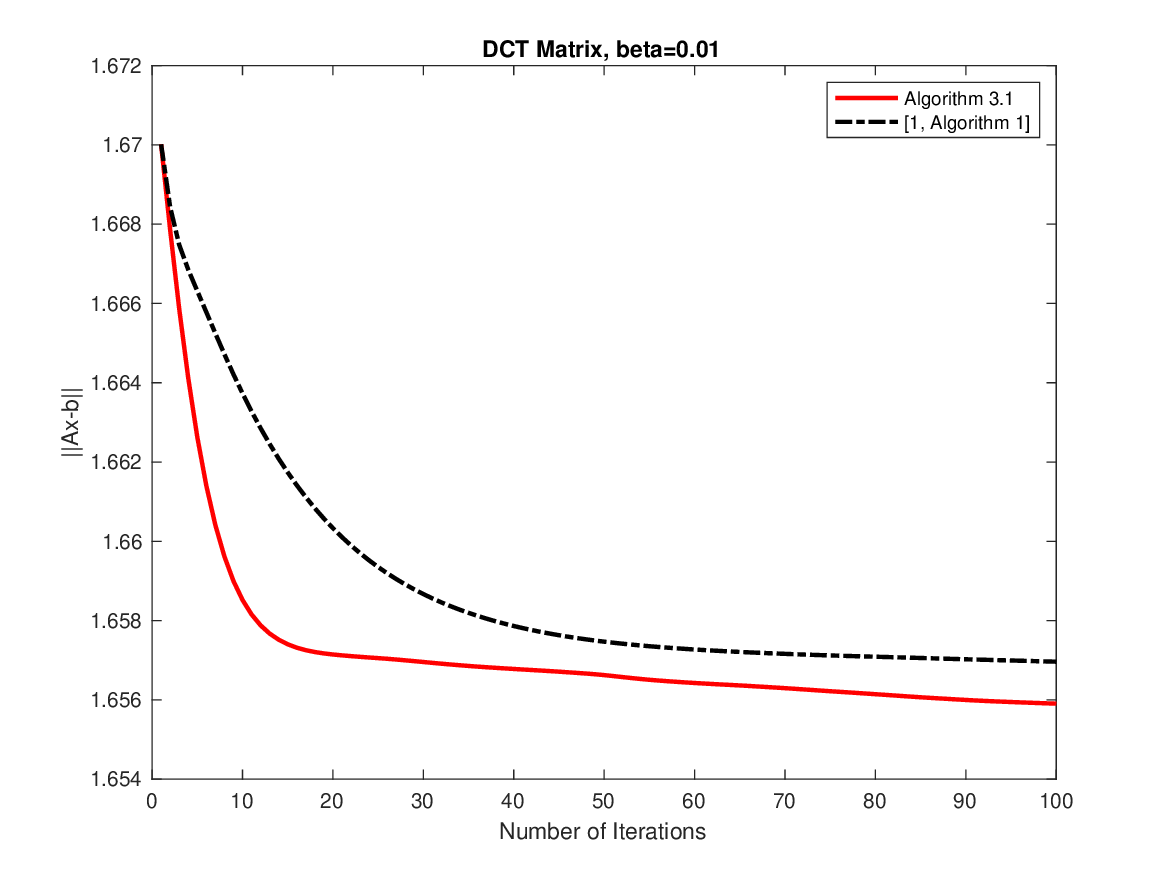}}}
    \label{Fig6.8}
    \caption{The convergence behavior of the proposed Algorithm \ref{P4Al4.11} and of \cite[Algorithm 1]{YJS} for $A\in {\mathbb{R}}^{64}\times \mathbb{R}^{256}$ with sparsity 10.} \label{Fig6.3}
\end{figure}

Next, we consider the matrix $A$ of order $256 \times 1024$ with sparsity $64$. We plotted the graph showing the value of $\|A\cdot-b\|$ depending on the number of iterations, for $\beta =10^{-2}$ and $\beta =10^{-3}$. The convergence behavior of \cite[Algorithm 1]{YJS} and Algorithm \ref{P4Al4.11} is shown in Figure \ref{Fig6.6}. There, one can notice that the value of $\|A\cdot-b\|$ is smaller for Algorithm \ref{P4Al4.11} than for \cite[Algorithm 1]{YJS} for both $\beta=10^{-1} $ and $\beta=10^{-2}$, revealing, again, that Algorithm \ref{P4Al4.11} outperforms \cite[Algorithm 1]{YJS} in both cases even after changing the dimensions of the matrix used.

\begin{figure}[h]
    \centering
    \subfloat[]{%
        \resizebox*{6cm}{!}{\includegraphics{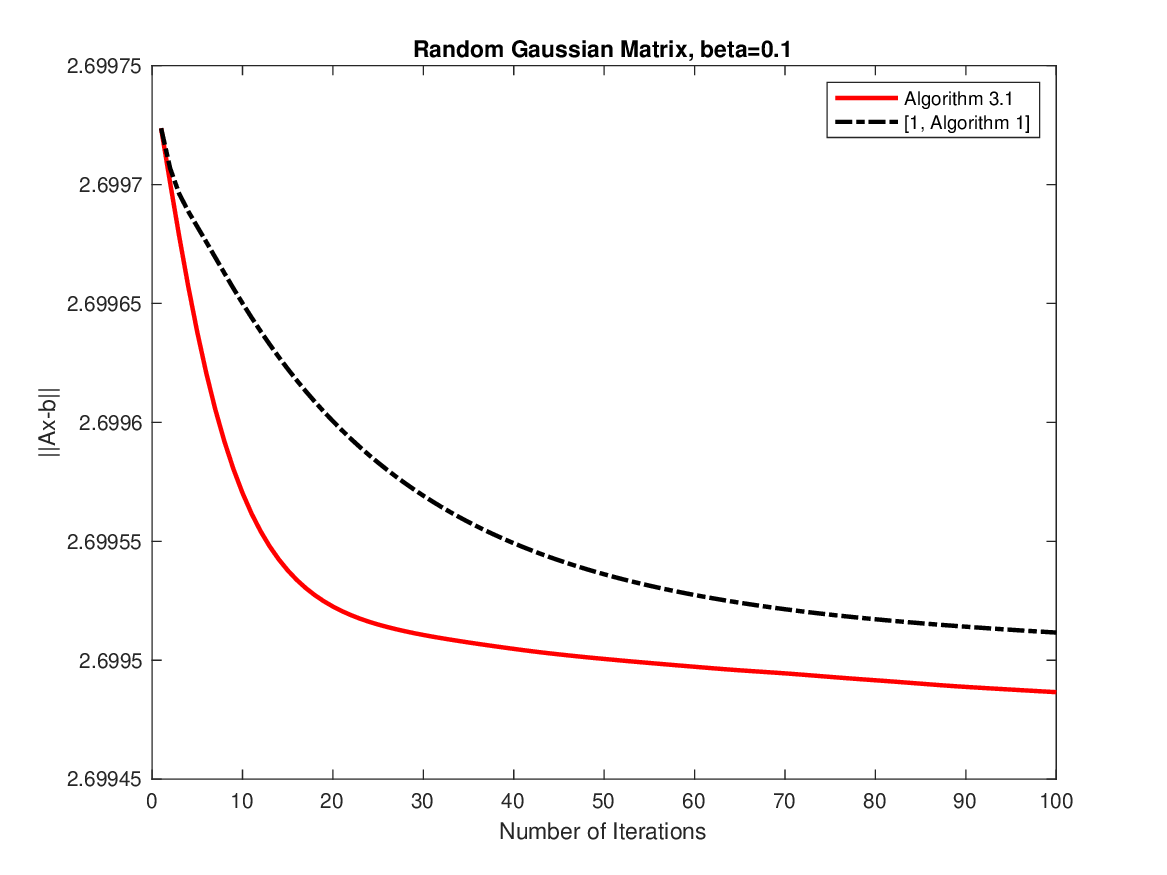}}}\hspace{5pt}
    \label{Fig6.4}
    \subfloat[]{
        \resizebox*{6cm}{!}{\includegraphics{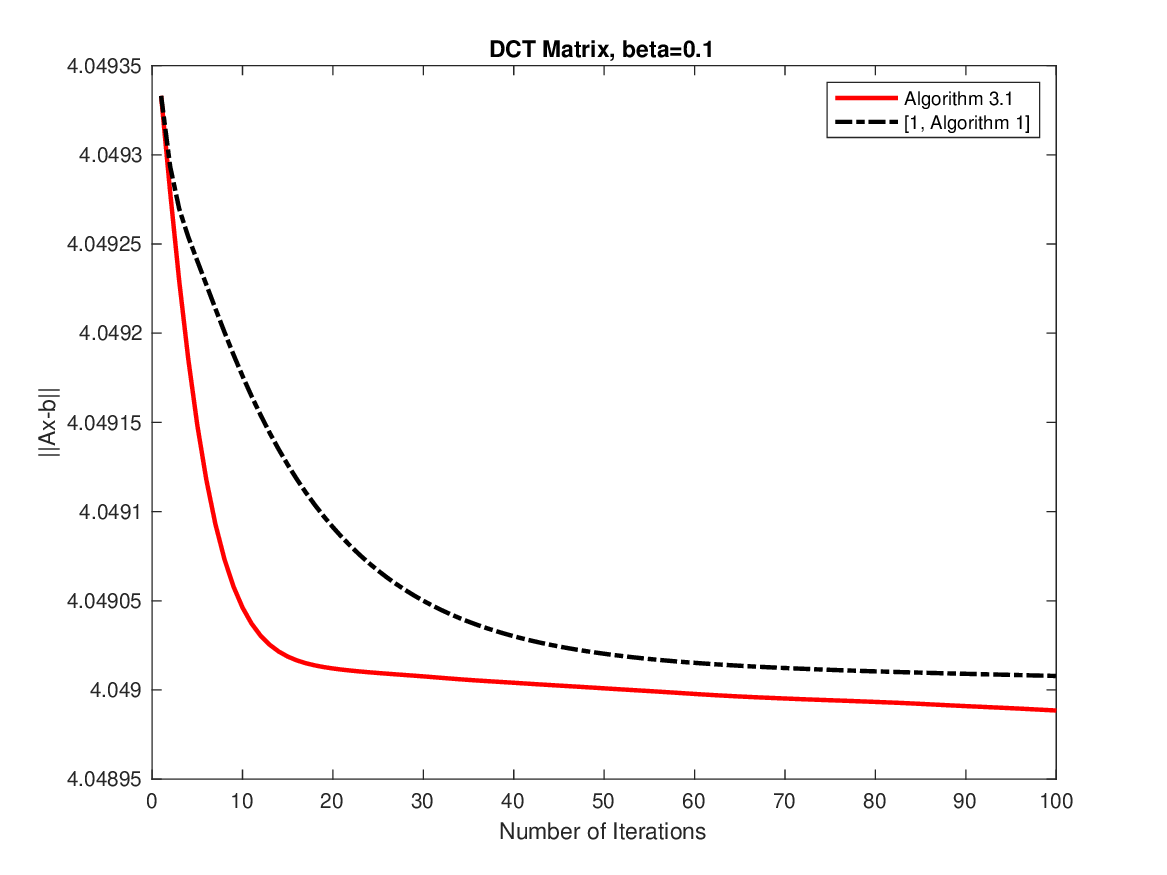}}}
    \label{Fig6.5}
     \subfloat[]{%
        \resizebox*{6cm}{!}{\includegraphics{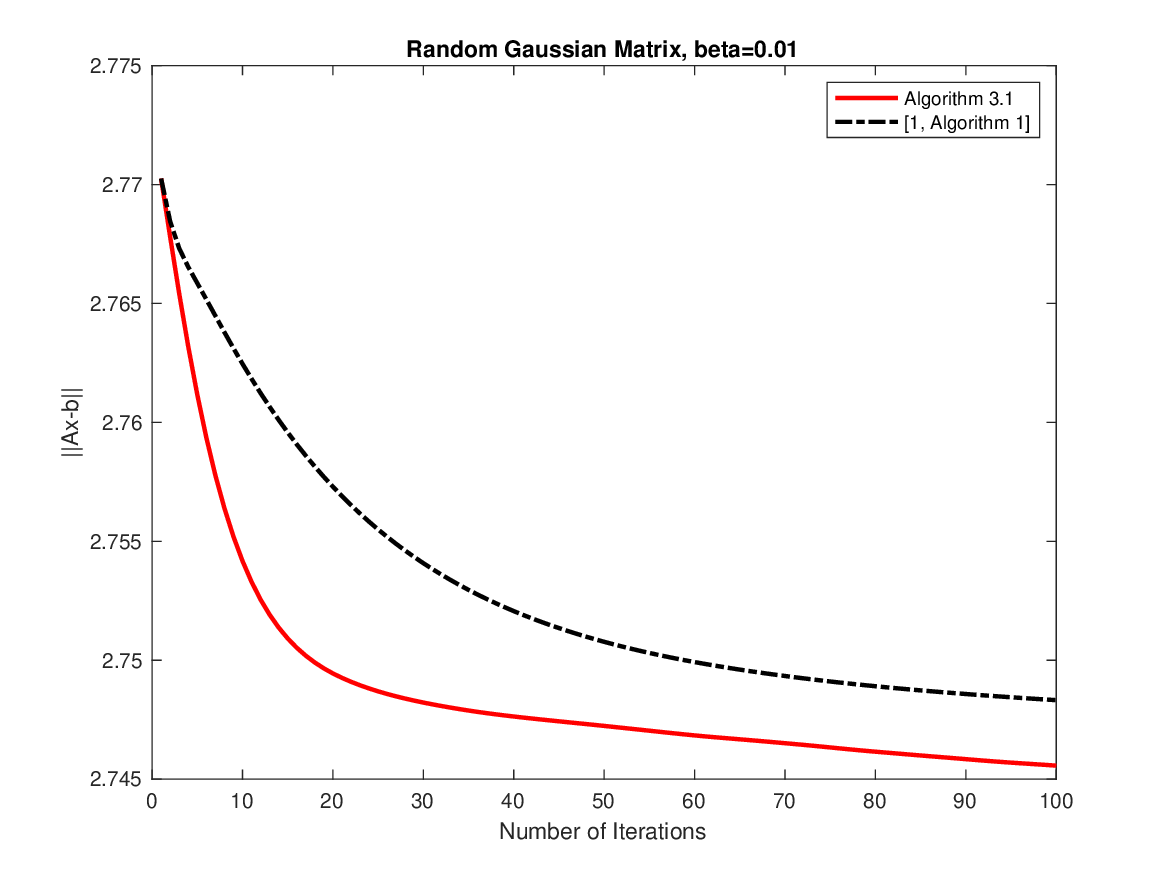}}}\hspace{5pt}
    \label{Fig6.10}
    \subfloat[]{
        \resizebox*{6cm}{!}{\includegraphics{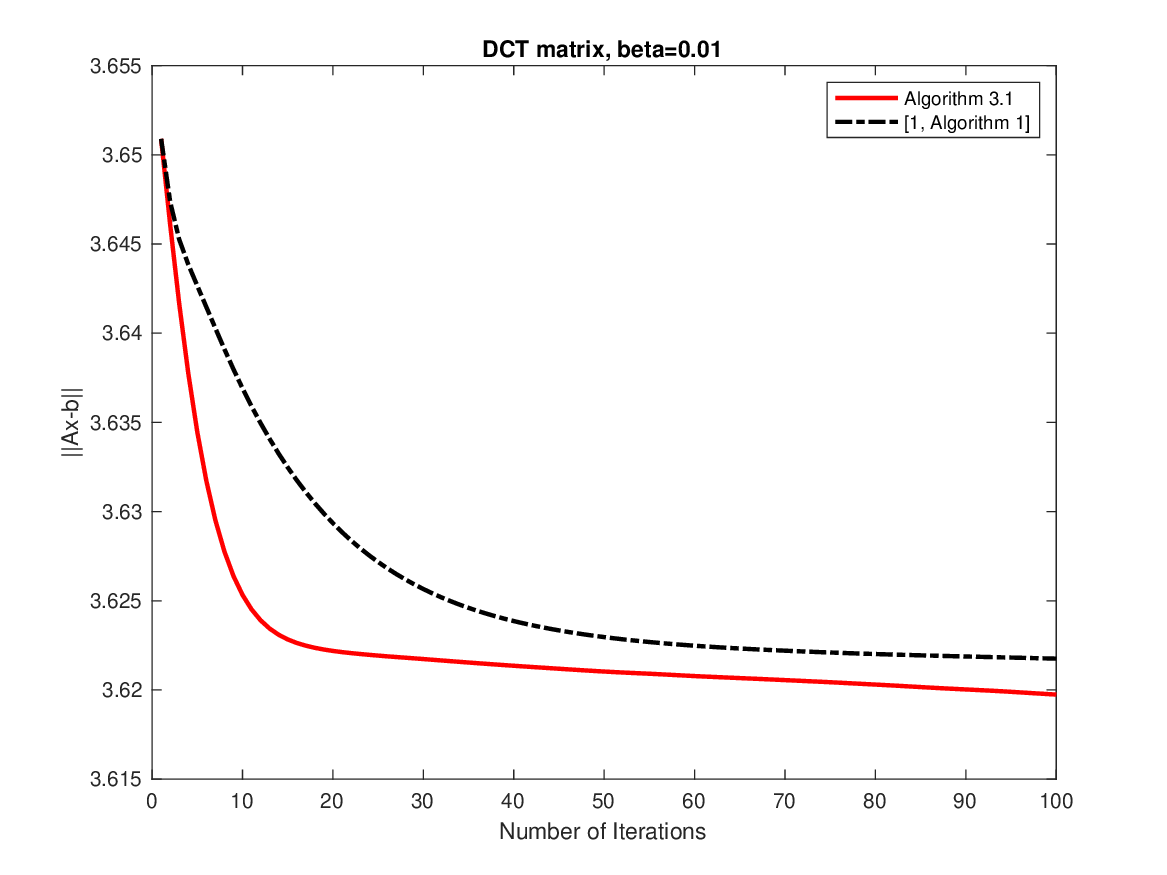}}}
    \label{Fig6.11}
 \caption{The convergence behavior of the proposed Algorithm \ref{P4Al4.11} and of \cite[Algorithm 1]{YJS} for $A\in {\mathbb{R}}^{256}\times \mathbb{R}^{1024}$ with sparsity 64.} \label{Fig6.6}
\end{figure}

In the third experiment, we studied the behavior of the proposed Algorithm \ref{P4Al4.11} with different choices of the proximal parameter $\beta \in \{10^{-1},10^{-2},10^{-3}\}$. The behavior of Algorithm \ref{P4Al4.11} is shown in Figure \ref{Fig6.9}.
One can notice there that the lower the value of $\beta$ is, the higher is the value of $\|A\cdot-b\|$. Even though we have no theoretical result to guarantee this empirical observation, it should be taken into consideration in future implementations of the iterative method we proposed that its convergence speed increases as the proximal parameter $\beta$ decreases.

\begin{figure}[h]
    \centering
    \subfloat[]{%
        \resizebox*{6cm}{!}{\includegraphics{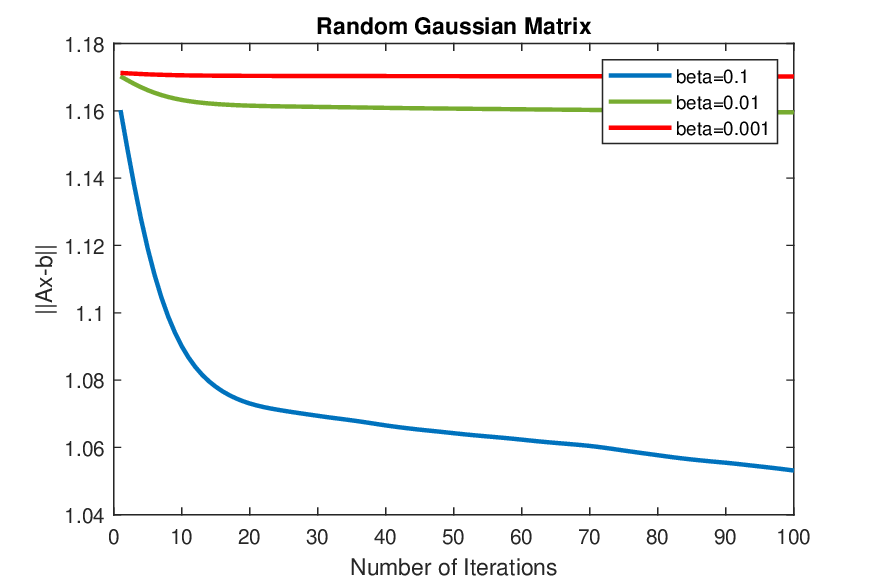}}}\hspace{5pt}
    \label{Fig6.12}
    \subfloat[]{
        \resizebox*{6cm}{!}{\includegraphics{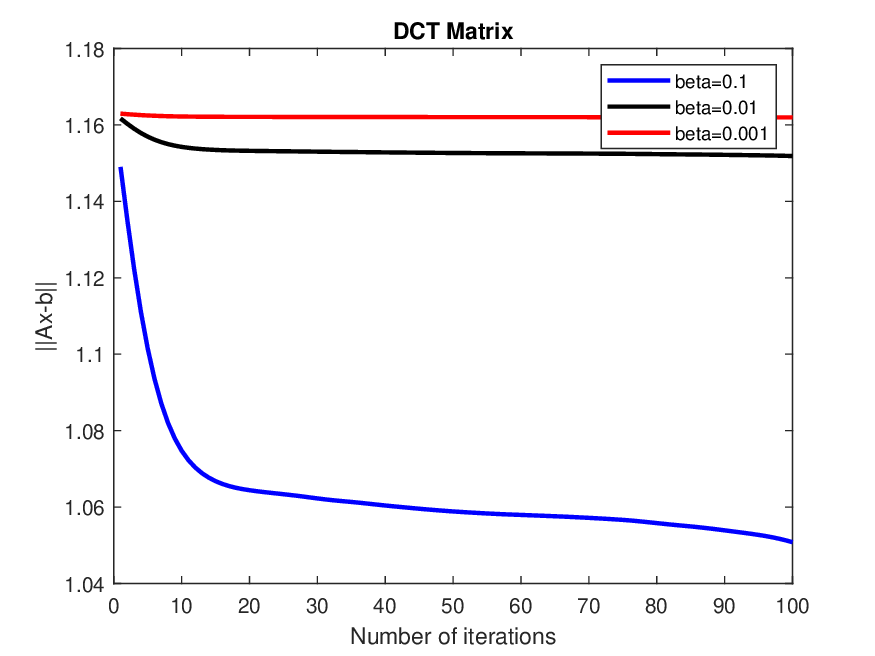}}}\hspace{5pt}
    \label{Fig6.13}
     \subfloat[]{%
        \resizebox*{6cm}{!}{\includegraphics{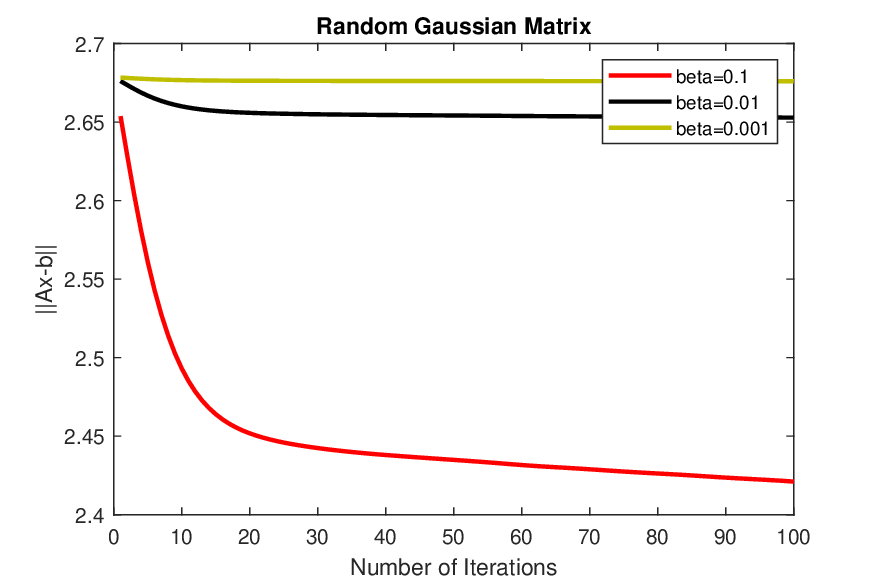}}}\hspace{5pt}
    \label{Fig6.14}
    \subfloat[]{
        \resizebox*{6cm}{!}{\includegraphics{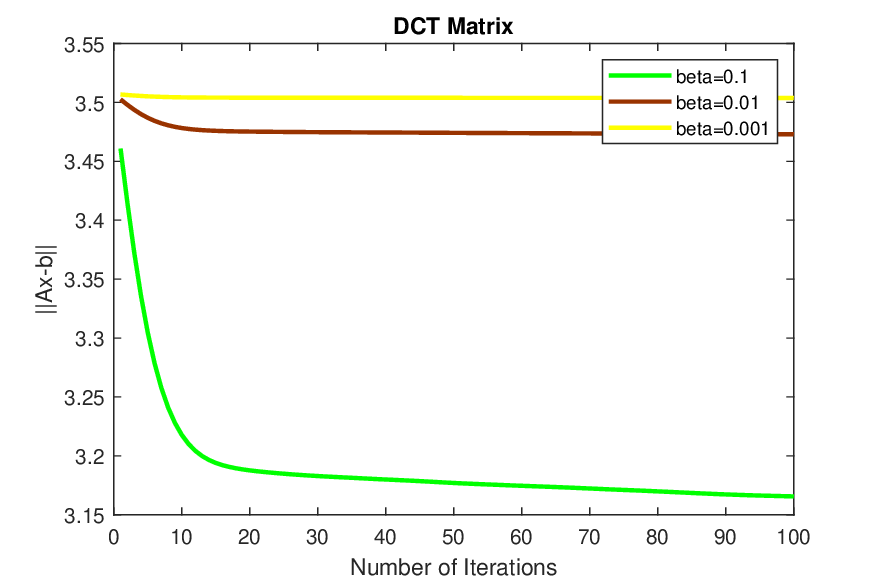}}}
    \label{Fig6.15}
 \caption{The convergence behavior of the proposed Algorithm \ref{P4Al4.11} for different choices of proximal parameter $\beta$.} \label{Fig6.9}
\end{figure}


\subsection{Experiment 2} In this section we have considered a different set-up to test the performance of our proposed algorithm and compare it with an already known algorithm. We consider the following problem:
\begin{align}\label{P5.2}
    \min_{x\in \mathbb{R}^k}\left\{f(x)+g(x)-h(x)\right\},
\end{align}
where $f:\mathbb{R}^n \to (-\infty, +\infty)$ is a function defined by $f(x)=\frac{1}{2}x^TQx+c^Tx+d$ and $Q \in \mathbb{R}^{600\times 400}$ is a positive definite matrix, $c\in \mathbb{R}^k$ is a constant vector and $d$ is a constant scalar, $g:\mathbb{R}^k \to \mathbb{R}$ is defined by $g(x)=\|x\|_1$ and $h:\mathbb{R}^k \to \mathbb{R}$ is the Huber loss function defined by
\begin{align*}
h(x)=\left\{
\begin{array}{l}
      \frac{1}{2}x^2, \|x\|_2 \leq \delta,\\
      \delta(|x|-\frac{1}{2}\delta), \text{ otherwise}.  
\end{array}\right.
  \end{align*}
for a given constant $\delta>0$. Problem \eqref{P5.2} is used for modeling applications arising in machine learning, especially in robust classification with sparse regularization \cite{wu2021nonconvex}. In this experiment, we compared the performance of the proposed Algorithm \ref{P4Al4.11} with \cite[Algorithm 1]{YJS} for different choices of $\beta>0$. We have randomly generated a matrix $Q\in \mathbb{R}^{600\times 400}$ and selected $\kappa=1, \theta=0.9, \theta_n = \frac{n-1}{2n +2.5}, n\in \mathbb{N}$. We have performed the experiment for $1000$ iterations and used $\|x_n -x_{n-1}\|^2 < 10^{-3}$ as a stopping criterion. 

In the first experiment, we have chosen $\delta=0.001$ and ran the algorithms for $\beta \in \{10^{-3},10^{-4},10^{-5}\}$. The performance of Algorithm \ref{P4Al4.11} and \cite[Algorithm 1]{YJS} is shown in Figure \ref{Fig5.2.11}, revealing that the graph of objective values generated by Algorithm \ref{P4Al4.11} is below its counterpart issued by \cite[Algorithm 1]{YJS} for all employed choices of $\beta$. This highlights that the convergence speed of our proposed Algorithm \ref{P4Al4.11} outperforms the one of \cite[Algorithm 1]{YJS}.

\begin{figure}[h]
    \centering
    \subfloat[]{%
        \resizebox*{5.5cm}{!}{\includegraphics{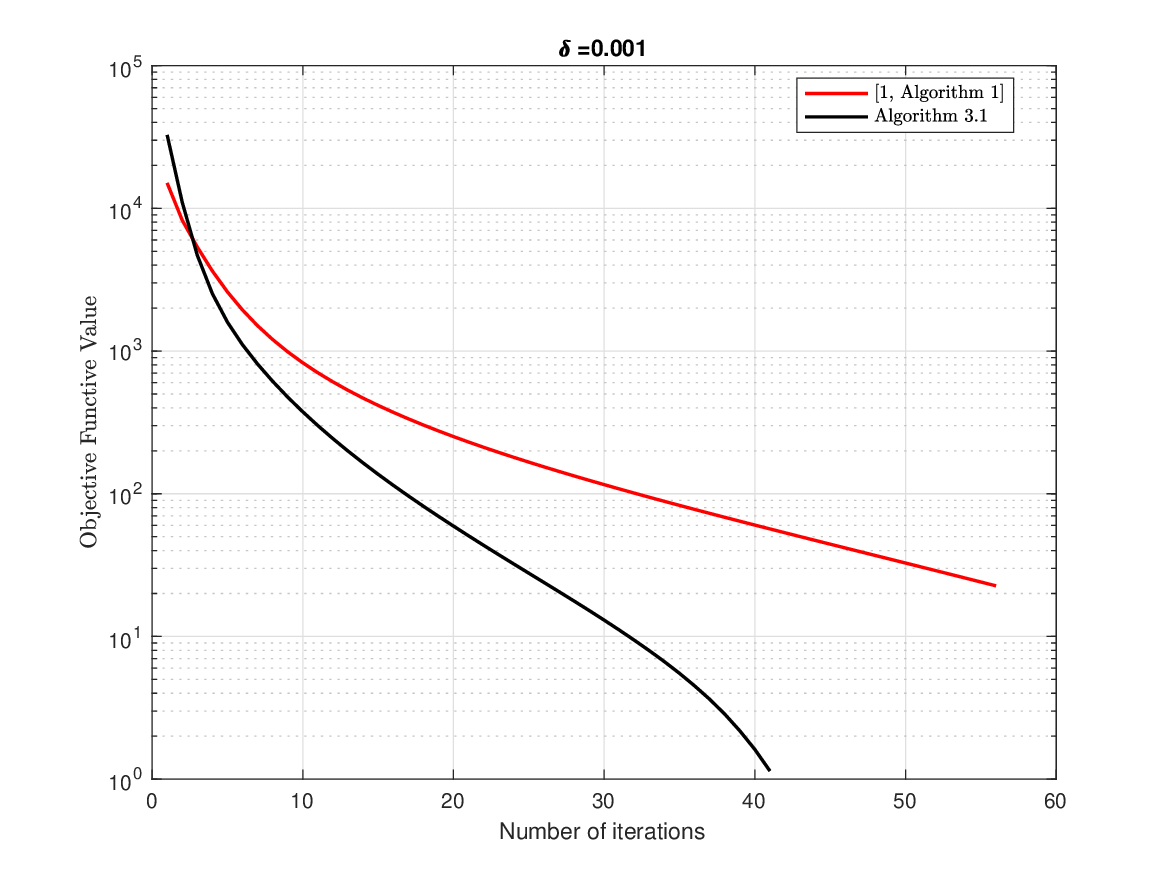}}}\hspace{5pt}
    \label{Fig5.2.1}
    \subfloat[]{
        \resizebox*{5.5cm}{!}{\includegraphics{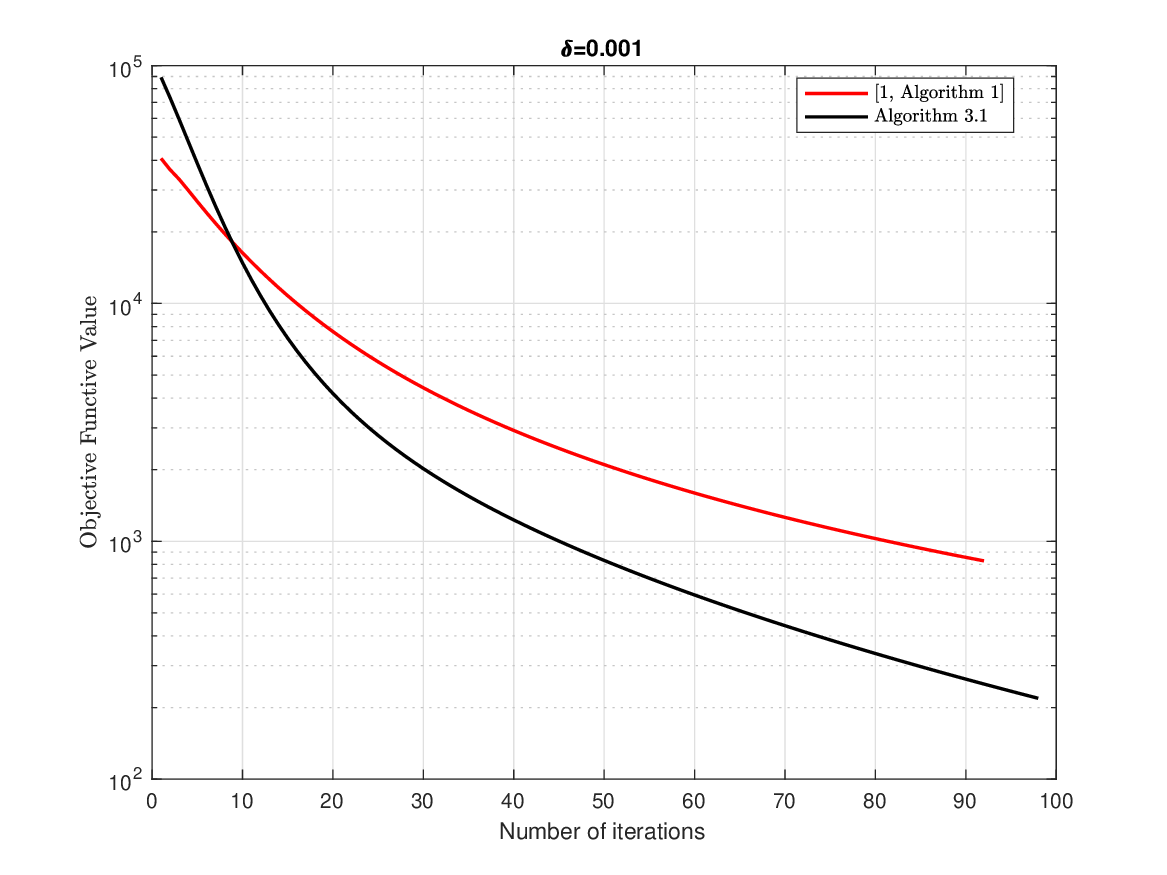}}}
    \label{Fig5.2.2}
     \subfloat[]{%
        \resizebox*{5.5cm}{!}{\includegraphics{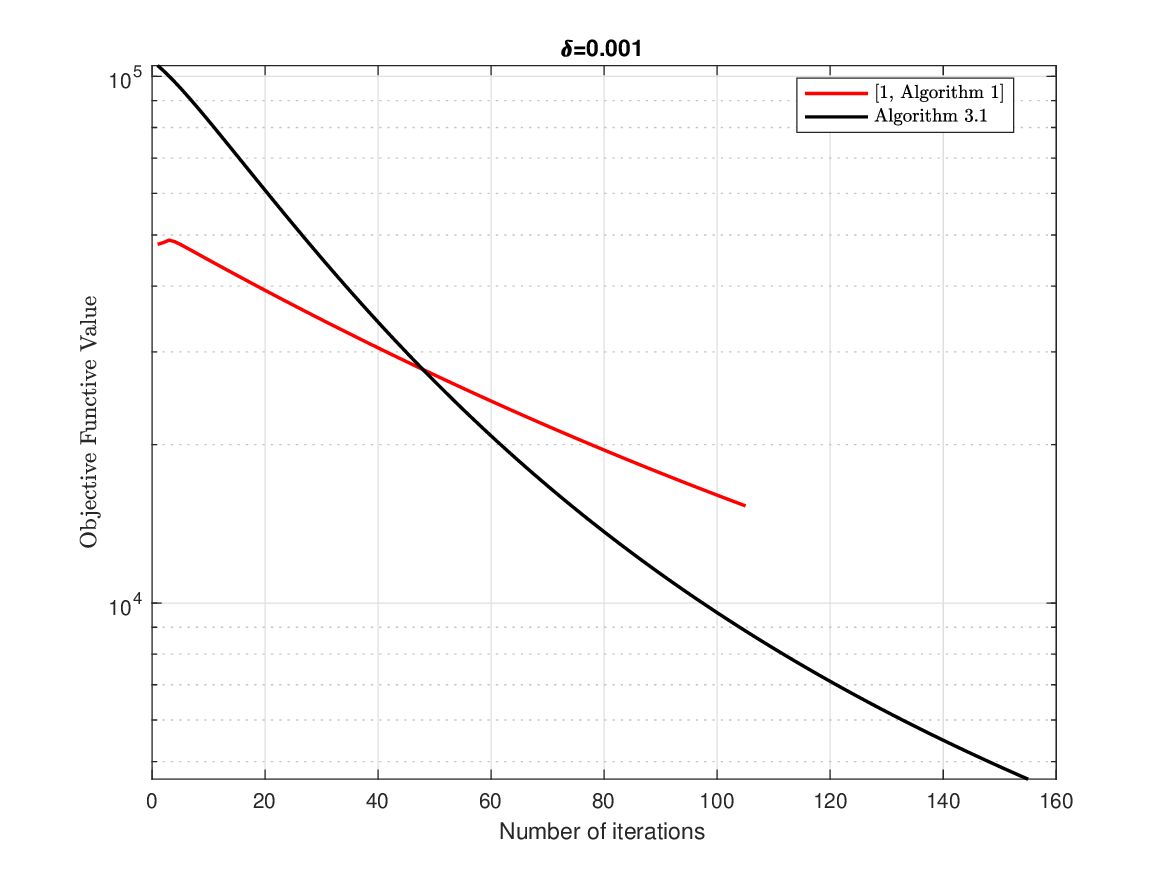}}}\hspace{5pt}
    \label{Fig5.2.3}

 \caption{The convergence behavior of the proposed Algorithm \ref{P4Al4.11} and of \cite[Algorithm 1]{YJS} for $\beta \in \{10^{-3},10^{-4},10^{-5}\}$ and $\delta =0.001$.} \label{Fig5.2.11}
\end{figure}

In second experiment, we have calculated functional value, $F$ for $\beta \in \{10^{-3},10^{-4}, 10^{-5}\}$ and $\delta =0.0001$. Figure \ref{Fig5.2.22}, demonstrates the variation of function value with number of iterations. It shows that graph of Algorithm \ref{P4Al4.11} is below to graph of \cite[Algorithm 1]{YJS}. Thus, Algorithm \ref{P4Al4.11} outperforms \cite[Algorithm 1]{YJS}.

\begin{figure}[h]
    \centering
    \subfloat[]{%
        \resizebox*{5.5cm}{!}{\includegraphics{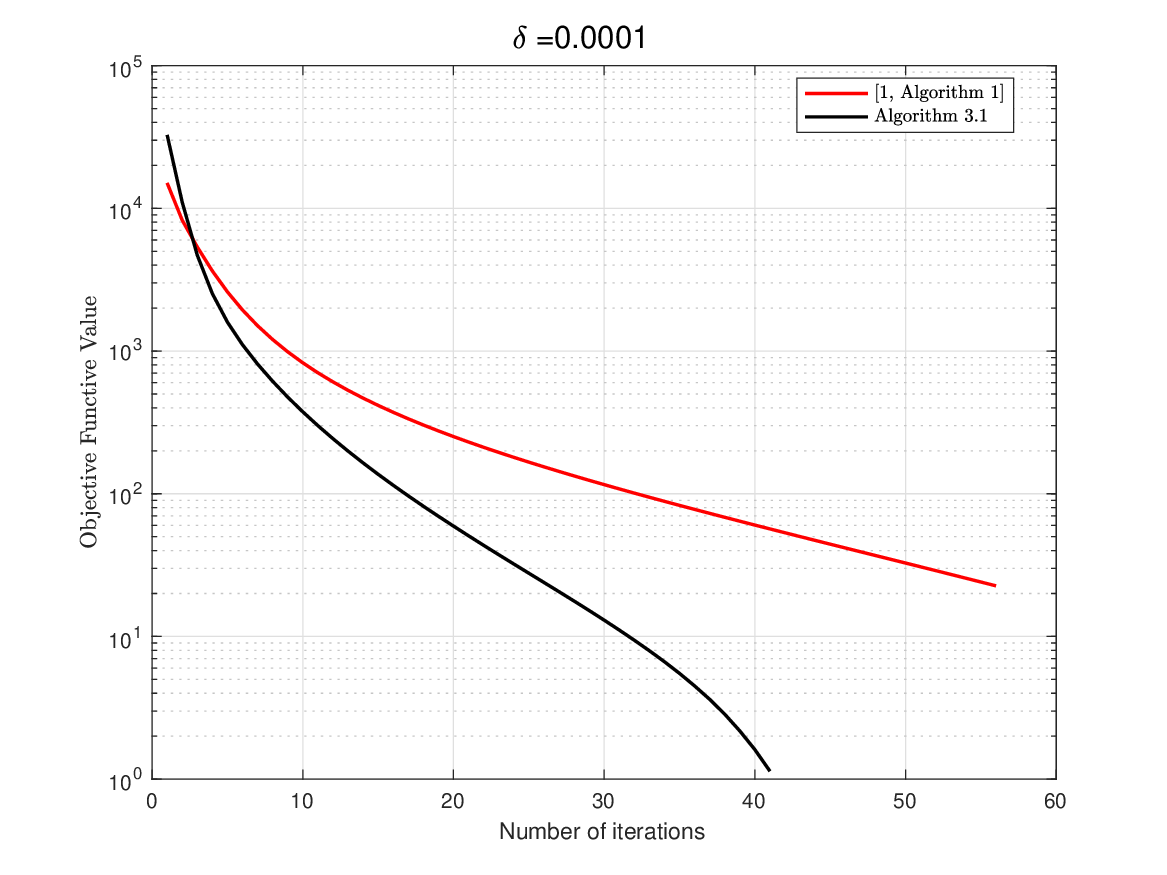}}}\hspace{5pt}
    \label{Fig5.2.4}
    \subfloat[]{
        \resizebox*{5.5cm}{!}{\includegraphics{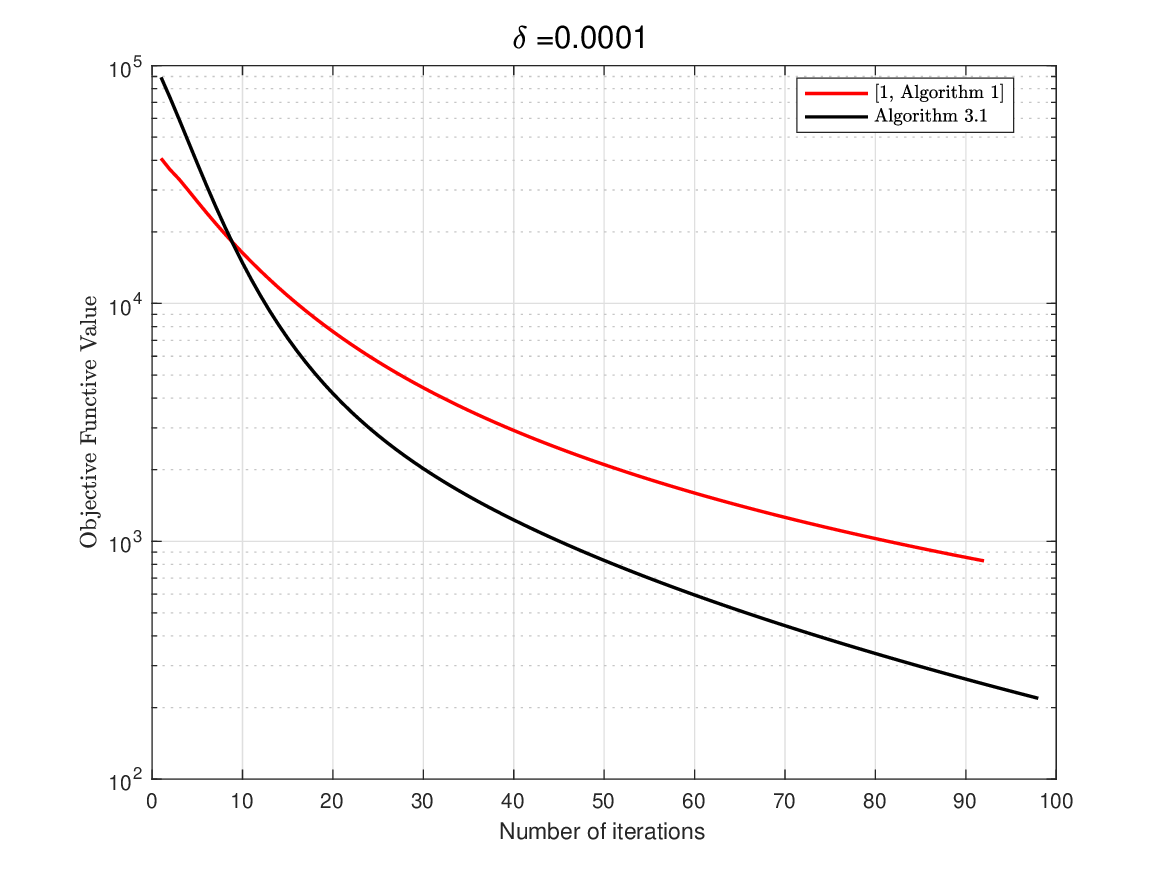}}}
    \label{Fig5.2.5}
     \subfloat[]{%
        \resizebox*{5.5cm}{!}{\includegraphics{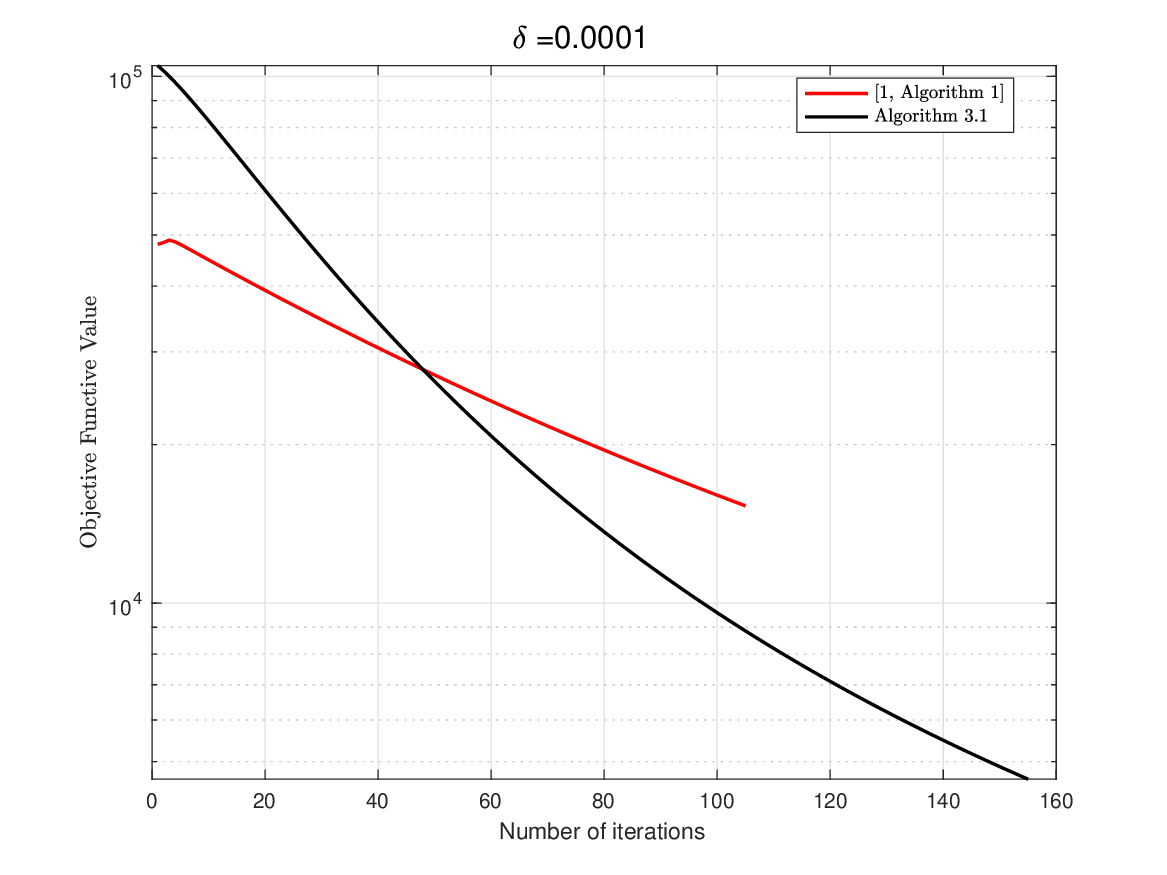}}}\hspace{5pt}
    \label{Fig5.2.6}

 \caption{The convergence behavior of the proposed Algorithm \ref{P4Al4.11} and of \cite[Algorithm 1]{YJS} for $\beta \in \{10^{-3},10^{-4},10^{-5}\}$ and $\delta =0.0001$.} \label{Fig5.2.22}
\end{figure}

\section*{Declarations}

\begin{itemize}
\item Conflict of interest/Competing interests: The authors declare that they have no competing interests.
\item Ethics approval and consent to participate: Not applicable.
\item Consent for publication: Not applicable.
\item Materials availability: Not applicable.
\item Code availability: The code used for the experiments is available from the corresponding author upon reasonable request.
\item Author contribution: All authors read and approved the final manuscript.
\end{itemize}


\bibliography{ref}

\end{document}